\documentclass[10pt,letterpaper]{article}
\usepackage[left=0.9in,right=0.9in,top=1in,bottom=1in]{geometry}
\usepackage{amsmath}
\usepackage{amsthm,alltt} 
\usepackage{amssymb}
\usepackage{graphicx}
\usepackage[onehalfspacing]{setspace}
\usepackage{multirow}
\usepackage{verbatim}
\usepackage{mathrsfs}
\usepackage{mathtools}
\usepackage[dvipsnames, table, xcdraw]{xcolor}
\usepackage{algpseudocode, algorithm}
\usepackage[normalem]{ulem}
\usepackage{indentfirst}
\usepackage{longtable}

\allowdisplaybreaks
\numberwithin{equation}{section}

\newtheorem{theorem}{Theorem}[section]
\newtheorem{proposition}{Proposition}[section]
\newtheorem{lemma}{Lemma}[section]
\newtheorem{definition}{Definition}[section]

\begin{document}

\title{Using Submodularity  within Column Generation to Solve the Flight-to-Gate Assignment Problem}
\author{Yijiang Li\thanks{School of Industrial and Systems Engineering, Georgia Institute of Technology (yijiangli@gatech.edu)}, John-Paul Clarke\thanks{Department of Aerospace Engineering and Engineering Mechanics, The University of Texas at Austin (johnpaul@utexas.edu)}, and Santanu S. Dey \thanks{School of Industrial and Systems Engineering, Georgia Institute of Technology (santanu.dey@isye.gatech.edu)}}
\date{}
\maketitle
\quad\\
\textbf{Abstract.}
In this paper, we provide a column generation-based approach for solving the airport flight-to-gate assignment problem, where the goal is to minimize the on-ground portion of arrival delays by optimally assigning each scheduled flight to a compatible gate. Specifically, we use a set covering formulation for the master problem and decompose the pricing problem such that each gate is the basis for an independent pricing problem to be solved for assignment patterns with negative reduced costs. We use a combination of an approximation algorithm based on the submodularity of the underlying set and dynamic programming algorithms to solve the independent pricing problems. To the best of our knowledge, this is the first use of submodularity property to efficiently solve pricing problems and improve the performance of column generation algorithm. We show that the dynamic programming algorithm is pseudo-polynomial when there are integer inputs. We also design and employ a rolling horizon method and block decomposition algorithm to solve large-sized instances. Finally, we perform extensive computational experiments to validate the performance of our approach.
\quad \\
\textbf{Keywords:} submodularity, column generation, flight-to-gate assignment, approximation algorithms, dynamic programming

\section{Introduction}
\subsection{Motivation and literature survey}
Airports throughout the world have seen a rapid increase in the numbers of flights and passengers over the past decade. Not withstanding the current decline due to COVID-19, the international Air Transport Association \cite{iata} expects $7.2$ billion passengers to travel in $2035$, a near doubling of the $3.8$ billion air travelers in $2016$. The vast majority of current airport facilities, in particular airport terminals, are not sized to handle such traffic. And, although expansions in capacity have been planned for the long term, in the near future the requisite capacity expansions are unlikely to materialize. Airports will therefore see increases in delays and associated increases in the cost of delays that are similar to or in excess of the $\$1.6$ billion or $6$ percent increase, from $\$26.6$ to $\$28.2$ billion, that was observed between 2017 and 2018 \cite{faa}.  These expected increases in the magnitude and cost of delays can only be mitigated through improved airport operations management.

One critical area of airport operations is the optimal and efficient assignment of arriving flights to gates. The airport flight-to-gate assignment problem has been extensively studied by many researchers in both the operations research and aviation communities. Many models and algorithms have been proposed. For a detailed review of past work in this area, we refer the readers to \cite{reviewsa} and \cite{DasGzaraStuzle}, but give a brief overview of the popular objectives and the common solution methods.

There are three popular objectives to consider in the models. The first one is the maximization of passenger satisfaction levels. In particular, researchers consider walking distances and waiting or transit time as proxies for the passenger satisfaction level. A more detailed approach involves a differentiation between the transfer passengers and destination passengers. In these models, the costs associated with the assignments of two flights to any two gates reflect the distance between the two gates and the connection time between the two flights. Interested readers are referred to \cite{mokhtarimousavi}, \cite{ChengHoKwan}, \cite{YanTang}, and \cite{kimFeronClarke}. The second class of objective focuses on airport and airline operations. Some papers, such as \cite{Kaliszewksi} and \cite{Cheng}, consider the minimization of the number of ungated flights or equivalently the number of flights assigned to the remote gates or introduce costs only incurred when the flights are assigned to remote gates to differentiate the priorities of the flights. Moreover, \cite{Jaehn} and \cite{NeumanAtkin} point out that airlines likely have preferences over which set of gates to park their flights leading to the consideration of maximizing total gate preference scores. The third popular objective is to improve the robustness of the solution to schedule variations. Due to the uncertain nature of the air transportation system, arrival times and departure times are likely to be stochastic subject to various factors such as weather and maintenance. Early arrival and late departure may cause the arriving flight to wait for the departing flight at the assigned gate to push back. This phenomenon is referred to as a gate conflict. Other studies consider robustness with respect to minimizing the expected gate conflict time. In the event of major disruptions such as severe weather or maintenance delays, airport has to be able to quickly recover from the deviations from the original arrival schedules and adjust the flight-to-gate assignments to reduce any potential delays. Readers are referred to the work of \cite{DorndorfJaehnPesch} and \cite{KumarBielaire}. Since the above mentioned three objectives are all very important to take into account and neither of the objectives is superior than the other two, some combinations of them are also explored. The weighted sum method is more common in such line of considerations, but a pareto-based approach has been utilized as well. The papers \cite{KimFeronClarkeMarzuoliDelahaye} and \cite{DorndorfJaehnPeschReferenceSchedule} are examples of studies that use weighted sum approach while \cite{Das} approaches this problem with a pareto local search method.

Due to the general $\mathcal{NP}$-hard nature of the problem, it can be extremely difficult to solve the problems directly and exactly. Many solution algorithms and heuristics have been proposed. In much of the work to date, an integer programming or mixed integer programming formulation is first presented and solver is called to obtain a solution and validate the model. These formulations are large and complex because of the many complicated constraints required to reflect real world phenomena. Consequently, it is very time-consuming if not impossible to obtain solutions within reasonable amount of time and it is challenging to improve these solution methodologies. Readers are referred to \cite{KumarBielaire} and \cite{TangWang} for the details of these models. In addition to integer programming or mixed integer programming approach, \cite{SekerNoyan} uses a stochastic optimization model to capture the uncertain nature of the problem and study the occurrence of gate conflicts. The paper \cite{YanHuo} uses a column generation method together with a branch-and-bound algorithm to obtain integral solutions. They give a description of the column generation method and a general strategy to apply the branch-and-bound algorithm. However, they have not given any explicit mathematical formulations for the column generation scheme and they have not discussed any strategies or methods to tackle the pricing problems. Due to the limitations/challenges presented in using exact formulations for this problem, heuristics are extensively used. Popular heuristics such as genetic algorithms and tabu search algorithms have been explored. Tabu search algorithm performs a local search around the current solution and prohibits searching at previous search points by keeping a list of those points for a few iterations while genetic algorithm is inspired from the evolutionary idea where a chromosome of higher fitness is more likely to survive and it involves operations of selections, crossover, and mutations. For an implementation of these two metaheuristics, readers are referred to \cite{kimFeronClarke}. Moreover, \cite{YuZhangLau} implemented a MIP-based heuristic which is adapted from many popular heuristic and they show that the adaptation gives a good performance. Lastly, there are some other heuristics based on neighborhood search, breakout local search, Fuzzy Bee Colony optimization (FBCO), and particle swarm optimization (PSO). Readers are referred to \cite{YuZhangLauNeighborhood}, \cite{BenlicBurkeWoodward}, \cite{BenlicBurkeWoodward}, and \cite{DengZhaoYangXiongSunLi} for these heuristics.
\subsection{Contributions of this paper}
As we discussed previously, there are gaps in both the formulations and solution methodologies of the flight-to-gate assignment problem that need to be addressed. In this paper, we focus on the airport operations and aim to minimize the arrival delays over all the flights. We propose an exact solution method that is capable of obtaining the flight-to-gate assignments at busy hub airports where a large number of flights are seen on a daily basis. Our main contributions are summarized below. 
\begin{enumerate}
	\item \textbf{Column generation formulation.} We present an explicit column generation formulation with a set covering master problem and we decompose the pricing problem into independent problems to which we apply both heuristic and exact algorithms to obtain favorable assignment patterns. Such decomposition allows us to simplify the pricing problem formulations and reduce the size of the individual pricing problem significantly. Experiments show that this formulation is far more efficient than a conventional compact mixed integer programming formulation.  
	\item \textbf{Pricing problem algorithms.} We explore a few approximation methods and exact algorithms to efficiently solve individual pricing problems after the decomposition. In particular, 
	\begin{itemize}
	\item We show and exploit the submodularity of the objective functions of the pricing problems to design an approximation algorithm. To the best of our knowledge, this is the first use of submodularity to efficiently solve the pricing problems in a column generation setting. 
	\item We derive an exact dynamic programming algorithm based on a recursive formula and show that the direct implementation of the algorithm works reasonably well in practice. We show the algorithm to be pseudo-polynomial in the special case of integer inputs and present a new heuristic based on this observation.
	\item We propose a block decomposition heuristic and prove it is a $2$-approximation algorithm for large-sized instances.
	\end{itemize}
\end{enumerate}

The structure of the rest of the paper is as follows. In Section \ref{problemSetup}, we provide a description of the flight-to-gate assignment problem. In Section \ref{formulation}, we provide the formulation for the column generation algorithm, and the exact algorithms and heuristics used for solving the pricing problems are discussed in Section \ref{solvingpricing}. Specifically, a submodular maximization approximation algorithm is described in Subsection \ref{approximation} and a dynamic programming algorithm with complexity analyses is presented in Subsection \ref{DP}. In Subsection \ref{largesizedinstance}, we present a new block decomposition approximation for large-sized instances. We discuss the branching scheme and our method for retrieving integer feasible solutions in Section \ref{branching}. Results of the computational experiments are presented in Section \ref{numericalexp}. Finally, we make some concluding remarks and discuss possible extensions of this work in Section \ref{conclusions} . 

\section{Problem setup} \label{problemSetup}
The flight-to-gate assignment problem may be described as follows. Let the set of flights be $\mathcal{F}$  and the set of gates be $\mathcal{G}$. Each flight $i \in \mathcal{F}$ has an arrival time, $a_i$, an airline, and an aircraft type which determines its minimum turn time, $\tau_i$, the minimum duration for which the flight must remain after parking at its assigned gate before being ready for departure. Each gate $k \in \mathcal{G}$ has a buffer time, $b_k$, that has to be observed between consecutive flights, and a set of allowable aircraft types. The solution of the problem gives an assignment in which each flight is assigned to one gate. For each flight $i$, we define its arrival delay as the difference between its arrival time, $a_i$ and the park time at its assigned gate, $t_i^g$. The decisions are the gate assignment, $x_{ik}$, the park time, $t_i^g$, and the push back time, $t_i^p$, for $i \in \mathcal{F}$ and $k \in \mathcal{G}$. In particular, the decision variables $x_{ik}$ are equal to $1$ if flight $i$ is assigned to gate $k$ and otherwise equal to $0$. In addition, we also consider the notion of compatibility, $\alpha_{ik}$, where a flight can only be assigned to a compatible gate. In particular, $\alpha_{ik}$ equal to $1$ if flight $i$ is compatible to gate $k$ and equal to $0$ otherwise. The compatibilities are determined by various factors including gate types (heavy or regular), flight types (heavy or regular), and airline preferences. Two additional conditions have to be met for an assignment to be valid:
\begin{enumerate}
	\item A flight occupies its assigned gate for a duration of at least its minimum turn time, $\tau_i$, for $i \in \mathcal{F}$.
	\item A pair of flights that are assigned to the same gate cannot occupy the gate at the same time and the buffer time, $b_k$, must be observed before the gate becomes ready for the next flight.
\end{enumerate}

We further assume independence of the gates which exclude interferences between the gates. This assumptions can be justified from a practical perspective. Note first that we are only considering arrival delays in this paper. The current practices of many airports is that pushbacks of the departing flights are coordinated by the ramp towers. Whenever a pushback of a flight is in the way of an arriving flight into its assigned gate, the pushback is delayed. Note that such delays are rare in many of the large airport where there are multiple taxi ways in the ramp area to minimize blockage of arriving flights by the departing flights. Consequently, what affects the park time of an arriving flight most is its assigned gate and the departing flight at the assigned gate. Based on the above descriptions, we give a compact mixed integer programming formulation below,
\begin{align}
\min \; & \sum_{i \in \mathcal{F}} (t_i^{g}-a_i) \label{MIPO1}\\
\text{subject to } & \sum_{k \in \mathcal{G}} x_{ik} = 1, \; \forall i \in \mathcal{F} \label{MIPC1}\\
& t_i^{g} + \tau_i \leq t_i^{p}, \; \forall i \in \mathcal{F} \label{MIPC2}\\
& t_i^{g} \geq a_i, \; \forall i \in \mathcal{F} \label{MIPC3}\\
& t_i^{p} + b_k - t_j^{g} \leq M(2-x_{ik}-x_{jk}), \; i < j,\; \forall i, j \in \mathcal{F},\; \forall k \in \mathcal{G} \label{MIPC4}\\
& x_{ik} \leq \alpha_{ik}, \; \forall i \in \mathcal{F}, j \in \mathcal{G} \label{MIPC6}\\
& x_{ik} \in \{0,1\},\; \forall i \in \mathcal{F}, k \in \mathcal{G} \label{MIPbinary}\\
& t_i^{g}, t_i^{p}\ge 0, \; \forall i \in \mathcal{F}, \label{MIPcontin}
\end{align} 
where $M$ is a sufficiently large constant. The expression in (\ref{MIPO1}) gives the objective function which minimizes the total arrival delays. Constraint (\ref{MIPC1}) ensures that each flight is assigned to exactly one gate. Constraint (\ref{MIPC2}) ensures the flight parks at a gate for at least a duration of the minimum turn time (condition 1 above). Constraint (\ref{MIPC3}) ensures that the park time, $t_i^g$, is no earlier than the arrival time, $a_i$. We want to emphasize that we do not impose an upper bound on the park times, $t_i^g$, since it is possible that an arriving flight has to wait at its assigned gate for a long period of time for the departing flight that is currently occupying the gate to pushback due to various reasons, such as weather, technical difficulties, or security reasons, although the chance of such long wait is slim. In addition, imposing such upper bounds on the park times can lead to infeasibility sometimes. Condition 2 above leads to constraint (\ref{MIPC4}) which observes the buffer time, $b_k$. It considers all pairs of flights $i < j$. When they are assigned to the same gate, the difference between $t_i^{p}$ and $t_j^{g}$ must be at least the buffer time of the gate, $b_k$. Constraint (\ref{MIPC6}) is the compatibility constraint to ensure the flights are only assigned to the compatible gates. Constraints (\ref{MIPbinary}) and (\ref{MIPcontin}) are binary and non-negativity requirements on the decision variables respectively. We will show in computational experiments that this compact formulation is not ideal to obtain the desired flight-to-gate assignment efficiently. We instead propose a column generation approach to solve the problem. 
Note also that this setup is in fact similar to what airlines do for their flight-to-gate assignments in the real world. Interested readers are referred to \cite{deltatabu}.

\section{Column generation formulation} \label{formulation}
\subsection{Master problem consideration}
Constraint (\ref{MIPC1}) in the compact mixed integer programming formulation suggests a set partitioning master problem should be used in the column generation formulation. However, as many papers, such as \cite{ColGen} and \cite{VRPTW}, have pointed out, a set covering master problem formulation is numerically more stable when solving its linear programming relaxation compared to a set partitioning master problem. Therefore, we present a set covering master problem formulation where the constraint in which each flight is assigned to exactly one gate is replaced by a constraint in which each flight is assigned to at least one gate. Since this gives a relaxation and we are minimizing the total arrival delay that is non-increasing after removing a flight from a gate, the set covering master problem either produces a set partitioning assignment or a set covering assignment in which we can fix each flight with multiple assignments to one of its assigned gates and recover a set partitioning assignment that has a total arrival delay at most as large as that of the original set covering assignment.
\subsection{The set covering master problem}
We define the following parameters:
\begin{align*}
	P_k & := \text{ the set of all feasible assignment patterns for gate }k\\
	\delta_{ip}^k & := \begin{cases}1 \; \text{ if flight } i \text{ is assigned on pattern } p \in P_k \text{ of gate }k \\ 0 \; \text{ otherwise}\end{cases}\\
	c_p^k & := \text{ the arrival delay of pattern } p \in P_k.
\end{align*}
Note that $\delta_{ip}^k$ and $c_p^k$ are constants that can be computed for a given assignment pattern. The decision variables $z_p^k \;(k \in \mathcal{G}, p \in P_k)$ are equal to $1$ if pattern $p$ of gate $k$ is used and $0$ otherwise. Then the set covering master problem is given by
\begin{align}
\min \; & \sum_{k \in \mathcal{G}} \sum_{p \in P_k} c_p^k z_p^k \label{colGenObj}\\
\text{subject to } & \sum_{k \in \mathcal{G}} \sum_{p \in P_k} \delta_{ip}^k z_p^k \ge 1, \; \forall i \in \mathcal{F} \quad (\pi_i)\label{coverConstraint}\\
& \sum_{p \in P_k} z_p^k = 1, \; \forall k \in \mathcal{G} \quad (\mu_k)\label{availConstraint}\\
& z_p^k \in \{0,1\}, \; \forall k \in \mathcal{G}, p \in P_k. \label{bound}
\end{align}

The objective (\ref{colGenObj}) is an expression for total arrival delays by summing the arrival delays of all patterns over all gates. Constraint (\ref{coverConstraint}) is the cover constraint that ensures each flight is assigned to at least one gate. Constraint (\ref{availConstraint}) is the availability constraint that ensures only one pattern of each gate is selected. Last constraint (\ref{bound}) is the binary requirements on the decision variables $z_p^k \;(k \in \mathcal{G}, p \in P_k)$ and relaxing these constraints to $0 \le z_p^k \le 1$ gives the linear programming relaxation of the set covering master problem. The linear programming relaxation is then solved in each iteration. 
\subsection{The pricing problems}
In the pricing problem, we construct feasible assignment patterns for each gate. If we associate dual variables $\pi_i$ with (\ref{coverConstraint}) and $\mu_k$ with (\ref{availConstraint}), the reduced cost $\bar{c}_p^k \;(k \in \mathcal{G}, p \in P_k)$ of a pattern $p$ of gate $k$ is given by 
\begin{equation}
\bar{c}_p^k = c_p^k - \sum_{i \in \mathcal{F}} \delta_{ip}^k\pi_i - \mu_k.
\end{equation}

Given a feasible solution $z$ to the linear programming relaxation of (\ref{coverConstraint}) - (\ref{bound}), we know that $z$ is optimal if and only if the optimal assignment patterns generated from the pricing problem have non-negative reduced costs. Then the pricing problem is given by
\begin{equation}
\min\{c_p^k - \sum_{i \in \mathcal{F}} \delta_{ip}^k\pi_i - \mu_k: k \in \mathcal{G}, p \in P_k\}.
\end{equation}

Since we assume each gate is independent, we can further decompose the pricing problem into $|\mathcal{G}|$ independent pricing problems, one for each gate. The $k^{th}$ pricing problem is given by 
\begin{align} \label{pricingproblem}
\min & \{c_p^k - \sum_{i \in \mathcal{F}} \delta_{ip}^k\pi_i - \mu_k: p \in P_k\}.
\end{align}

For the decision variables, similar to the compact formulation (\ref{MIPO1}) - (\ref{MIPcontin}), we use binary decision variables $x_{ik}$ which are equal to $1$ if flight $i$ is assigned at gate $k$ and equal to $0$ otherwise, and continuous variables $t_i^g$ and $t_i^{p}$ for the park times and push back times of flight $i$ respectively. Recall the definitions of the constants $\delta_{ip}^k$ and $c_p^k$ which are given by
\begin{align*}
	\delta_{ip}^k & := \begin{cases}1 \; \text{ if flight } i \text{ is assigned on pattern } p \in P_k \text{ of gate }k \\ 0 \; \text{ otherwise}\end{cases}\\
	c_p^k & := \text{ the arrival delay of pattern } p \in P_k.
\end{align*}

When the pricing problems are solved to generate a new assignment pattern $p$, the values of the constants, $\delta_{ip}^k$, are determined using the values of the decision variables $x_{ik}$ and $c_p^k$ is the sum of all individual arrival delay, $t_i^g-a_i$. Let $M$ denote a large constant, then the $k^{th}$ pricing problem is given by
\begin{align}
\min \; & \sum_{i \in \mathcal{F}} (t_i^{g}-a_i) - \sum_{i \in \mathcal{F}} x_{ik}\pi_i - \mu_k \label{pricingObj}\\
\text{ subject to } \; & t_i^g + \tau_i \leq t_i^{p}, \; \forall i \in \mathcal{F} \label{C1}\\
& t_i^{g} \geq a_i, \; \forall i \in \mathcal{F} \label{C2}\\
& t_i^{p} + b_k - t_j^{g} \le M(2-x_{ik}-x_{jk}), \; \forall i < j,\;\;\; i,j \in \mathcal{F}\label{C3}\\
& x_{ik} \le \alpha_{ik}, \; \forall i \in \mathcal{F}, k \in \mathcal{G} \label{compatibility}\\
& x_{ik} \in \{0,1\}, \; \forall i \in \mathcal{F},\; k \in \mathcal{G} \label{assBound}\\
& t_i^{g}, t_i^{p}\ge 0, \; \forall i \in \mathcal{F}. \label{timeBound}
\end{align}

The objective function (\ref{pricingObj}) is obtained by expressing $c_p^k$ explicitly in decision variables $t_i^g$ and parameters $a_i$, and substituting the parameters $\delta_{ip}^k$ by the decision variables $x_{ik}$. Constraints (\ref{C1}) - (\ref{timeBound}) follow directly from the constraints (\ref{MIPC2}) - (\ref{MIPcontin}) in the compact mixed integer programming formulation. However, we do not need constraint (\ref{MIPC1}). For flights that are not assigned to this gate, we assume their arrival delays to be zero.

The solutions $x_{ik}$ from a pricing problem form a potential assignment pattern. If the corresponding reduced cost given by the objective value is negative, then this assignment pattern is favorable and added to the master problem. 

We want to point out that $\mu_k$ is a constant in the $k^{th}$ pricing problem and thus dropping this term we can equivalently solve (\ref{pricingObj}) - (\ref{timeBound}) by solving the following maximization problem:
\begin{align}
\max \; & - \sum_{i \in \mathcal{F}} (t_i^g - a_i) + \sum_{i \in \mathcal{F}} x_{ik}\pi_i \label{modifiedpricingObj}\\
\text{ subject to } \;& (\ref{C1}) - (\ref{timeBound}). \notag
\end{align}

This equivalent problem is intuitively easier to interpret as we may consider $\pi_i$ as the benefits of accepting flight $i$ at the cost of an arrival delay $t_i^g - a_i$. Consequently, we are attempting to maximize the total net benefits over all flights. We refer to this maximization problem as the pricing problem for any gate in all following sections and use the terms total net benefits and reduced cost interchangeably. 

\section{Solving the pricing problem} \label{solvingpricing}
\subsection{Pre-processing}
The decomposition of the pricing problem into smaller pricing problems allows additional pre-processing of the set of flights $\mathcal{F}$ for each gate. Consider a particular gate $k$, we can construct a subset of flights $\mathcal{F}' \subseteq \mathcal{F}$ to include only compatible flights and flights with positive dual variables $\pi_i$. To see this fixing is correct, the incompatible flights are not allowed to be assigned to this gate while accepting the flights with $\pi_i \le 0$ does not contribute any benefit at the cost of arrival delays of other flights. For the rest of the discussions in this section, we use the subset of flights $\mathcal{F}'$ and assume $|\mathcal{F}'|=n$. Note that for different gates, the set $\mathcal{F}'$ could be different.
\subsection{Submodular approximation algorithm} \label{approximation}
Submodularity is a well studied property of set functions and there are many existing algorithms that can approximate the task of maximizing a submodular function very efficiently and effectively. We notice that, in the context of the flight-to-gate assignment problem, the submodularity of the objective function (\ref{modifiedpricingObj}) is very intuitive. Consider any gate and given a set of accepted flights at this gate, the impact of accepting an additional flight on the total arrival delay of this set of flights is at least as large as that of accepting the additional flight with a subset of those flights.

Formally consider the pricing problem for gate $k$ and recall that our objective after pre-processing the set of flights is
\begin{align*}
\max \; & - \sum_{i \in \mathcal{F}'} (t_i^g - a_i) + \sum_{i \in \mathcal{F}'} x_{ip}\pi_i. 
\end{align*}

For simplicity, we denote $t_i^{g}-a_i$ by $\triangle t_i^{g}$ and clearly $\triangle t_i^g \ge 0$. We define a function $f: 2^{\mathcal{F}'} \mapsto \mathbb{R}$ as follows,
\begin{align} \label{subfDef}
f(A) = - \sum_{i \in A} \triangle t_i^{g, A} + \sum_{i \in A}\pi_i.
\end{align} 

We further denote the variables $\triangle t_i^{g}$ under a set of assigned flights $A$ by an additional superscript $A$ and assume that the values of $\triangle t_i^{g,A}$ for $i \notin A$ to be zero. By the definition of $f$, we have that $f(\emptyset) = 0$. For a given set of assigned flights $A$, the constraints (\ref{C1}) - (\ref{C3}) can be incorporated when evaluating $f(A)$. Specifically, let the flights in the set $A$ be $1, 2, \ldots\,|A|$ and the total benefits $\sum_{i \in A}\pi_i$ can be computed. What remains is to compute the arrival delays. Note that the first flight parks at the gate at its arrival time, $a_{1}$, and pushes back after a duration of its minimum turn time at $a_{1} + \tau_{1}$. The gate becomes available for the second flight after a duration of the buffer time at $a_{1} + \tau_{1} + b_k$.  Consequently, an arrival delay of $a_{1} + \tau_{1} + b_k - a_{2}$ on the second flight is incurred if the second flight arrives earlier than $a_{1} + \tau_{1} + b_k$ and it parks at $a_{1} + \tau_{1} + b_k$. Otherwise, there is no arrival delay on the second flight and it parks at its arrival time, $a_{2}$. The same procedure can be applied to the rest of flights in the set $A$ to compute $f(A)$.

Note that since the subset of flights $\mathcal{F}'$ are all compatible to this gate and the above procedure incorporates constraints (\ref{C1}) - (\ref{C3}), we can pose the pricing problems as the following equivalent unconstrained maximization problem 
\begin{equation} \label{submaxpro}
\max\{f(A): A \subseteq 2^{\mathcal{F}'}\}. \tag{subMax}
\end{equation}

In order to show the submodularity of the function $f$, we need the following observation about the variables $\triangle t_i^{g}$.
\begin{lemma}\label{subsetDelayLemma}
If $A \subseteq B \subseteq \mathcal{F}'$, then
$$\sum_{i \in B} \triangle t_i^{g, B} \ge \sum_{i \in A} \triangle t_i^{g, A}.$$
\end{lemma}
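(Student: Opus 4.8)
The plan is to reduce the general statement to the case where $B$ contains exactly one more flight than $A$, and then to establish a pointwise monotonicity of the park times. First I would argue by induction on $|B \setminus A|$: since one can pass from $A$ to $B$ by adding the extra flights one at a time, it suffices to prove the claim when $B = A \cup \{j\}$ for a single flight $j \notin A$, and the general inequality then follows by chaining these single-step inequalities.

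For the single-step case, I would exploit the recursive structure described in the paragraph preceding the lemma. Sort the flights of $B$ by arrival time as $f_1, \dots, f_m$ with $a_{f_1} \le \cdots \le a_{f_m}$, and let $f_\ell = j$. Writing $R_S(p)$ for the time at which the gate becomes free after processing the first $p$ flights of a set $S$ in this order, the park time of the $p$-th flight is $t_{f_p}^{g} = \max(a_{f_p}, R_S(p-1))$ and the release is updated by $R_S(p) = t_{f_p}^{g} + \tau_{f_p} + b_k$. The crucial observation is that the flights arriving before $j$ (positions $1, \dots, \ell-1$) are identical in $A$ and $B$ and see exactly the same predecessors, so their park times---and hence their delays $\triangle t_{f_p}^{g}$---are unchanged.

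The heart of the argument is to show that every flight arriving after $j$ has a park time in $B$ at least as large as its park time in $A$. I would prove this by induction along the sorted sequence, tracking release times and carefully aligning the index $p$ of a common flight in $B$ with its index $p-1$ in $A$ (since $j$ has been deleted). At the position of $j$ one has $R_B(\ell) = \max(a_j, R_B(\ell-1)) + \tau_j + b_k \ge R_B(\ell-1) = R_A(\ell-1)$, because $\tau_j, b_k \ge 0$ and the first $\ell-1$ flights coincide. Then, using that $\max(a_{f_p}, \cdot)$ is monotone in its second argument and that $\tau_{f_p}, b_k \ge 0$, the inequality $R_B(p-1) \ge R_A(p-2)$ propagates to $R_B(p) \ge R_A(p-1)$ and simultaneously yields $t_{f_p}^{g,B} \ge t_{f_p}^{g,A}$, i.e. $\triangle t_{f_p}^{g,B} \ge \triangle t_{f_p}^{g,A}$, for each later flight. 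This bookkeeping---verifying that release-time dominance is preserved by the recursion across the reindexing---is where I expect the only real difficulty to lie.

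Finally I would assemble the pieces by splitting the sum over $B$ into the flights before $j$, the flight $j$ itself, and the flights after $j$. The first group contributes equally in $A$ and $B$, the added flight contributes a nonnegative delay $\triangle t_j^{g,B} \ge 0$, and each later flight contributes at least as much in $B$ as in $A$ by the monotonicity just established. Summing these term-by-term inequalities gives $\sum_{i \in B} \triangle t_i^{g,B} \ge \sum_{i \in A} \triangle t_i^{g,A}$, which completes the single-step case and, by the reduction above, proves the lemma.
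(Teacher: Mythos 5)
Your proof is correct and rests on the same core idea as the paper's: decompose the sum and use pointwise monotonicity of the delays, namely $\triangle t_i^{g,B} \ge \triangle t_i^{g,A}$ for $i \in A$ together with $\triangle t_i^{g,B} \ge 0$ for $i \in B \setminus A$. The only difference is one of rigor rather than route: the paper simply asserts the monotonicity of park times under a superset of assigned flights, whereas you actually prove it, reducing to a single insertion $B = A \cup \{j\}$ and propagating release-time dominance ($R_B(p-1) \ge R_A(p-2)$) along the arrival-sorted sequence --- a correct and worthwhile filling-in of the step the paper leaves implicit.
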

\begin{proof}[proof of Lemma \ref{subsetDelayLemma}.]
We can rewrite the left hand side expressions into
\begin{align*}
\sum_{i \in B} \triangle t_i^{g, B} & = \sum_{i \in A} \triangle t_i^{g, B} + \sum_{i \in B \backslash A} \triangle t_i^{g, B}\\
& \ge \sum_{i \in A} \triangle t_i^{g, A} + \sum_{i \in B \backslash A} \triangle t_i^{g, B}.
\end{align*}

The last inequality is valid because $A \subseteq B$, for any $i \in A$, thus the arrival delay of $i$ in the set of assigned flights $B$ of larger cardinality is at least as large as its arrival delay in the set of assigned flights $A \subseteq B$. Additionally, since $\triangle t_i^{g,B} \ge 0$, we have the desired inequality. 
\end{proof}

We next show that $f$ is a submodular function. 
\begin{lemma} \label{submodFunc}
$f$ is a submodular set function defined on $\mathcal{F}'$. 
\end{lemma}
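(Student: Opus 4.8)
The plan is to verify submodularity through its diminishing-returns definition: to show that for every $A \subseteq B \subseteq \mathcal{F}'$ and every $e \in \mathcal{F}' \setminus B$ we have $f(A \cup \{e\}) - f(A) \ge f(B \cup \{e\}) - f(B)$. Since the term $\sum_{i \in A}\pi_i$ in (\ref{subfDef}) is modular, it contributes the same amount $\pi_e$ to both marginal differences and cancels, so the statement reduces to a claim about the delay term $g(A) := \sum_{i \in A}\triangle t_i^{g,A}$: its marginal increments should be \emph{non-decreasing} in the set (equivalently, $g$ is supermodular and $-g$ is submodular). Concretely I must show $g(B \cup \{e\}) - g(B) \ge g(A \cup \{e\}) - g(A)$, i.e.\ that inserting a flight into a more crowded gate creates at least as much additional total delay.

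Rather than compare $A$ and $B$ directly, I would first record a short self-contained reduction to the local pairwise condition: a set function is submodular if and only if $f(C\cup\{i\}) + f(C\cup\{j\}) \ge f(C) + f(C\cup\{i,j\})$ for all $C \subseteq \mathcal{F}'$ and all distinct $i,j \in \mathcal{F}' \setminus C$. The nontrivial direction follows by telescoping this inequality along a chain from $A$ to $B$, adding one element of $B \setminus A$ at a time, which shows the marginal of $e$ is non-increasing. After cancelling the modular term once more, the task becomes the single-extra-flight statement $g(C\cup\{i,j\}) - g(C\cup\{j\}) \ge g(C\cup\{i\}) - g(C)$; that is, inserting $j$ into the environment only increases the marginal delay caused by inserting $i$.

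To analyze this I would use the sequential evaluation of $f$ described after (\ref{submaxpro}): sort the flights of a set by arrival time and compute park times by $t^g_{s_\ell} = \max\bigl(a_{s_\ell},\, t^g_{s_{\ell-1}} + \tau_{s_{\ell-1}} + b_k\bigr)$, so that the marginal delay of inserting $i$ equals $i$'s own delay (governed by the ready time produced by the flights arriving before $i$) plus the cumulative \emph{push} that $i$ exerts on the flights arriving after it. Assuming without loss of generality $a_i \le a_j$, flight $j$ is a successor of $i$, so $i$'s predecessor set is identical in the environments $C$ and $C\cup\{j\}$ and hence $i$'s own delay is unchanged; the whole inequality then rests on showing that the push generated by $i$ is at least as large when the extra flight $j$ is present. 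I would establish this from two monotonicity facts, both following from the order-preserving structure of the recursion already exploited in Lemma \ref{subsetDelayLemma}: first, individual park times are monotone under set inclusion, so every displacement caused by $i$ is non-negative and $j$ contributes an extra non-negative pushed term; second, the displacement that $i$ inflicts on any fixed downstream flight does not shrink when the extra flight $j$ is added.

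The main obstacle I anticipate is precisely this last point, the monotonicity of the \emph{cascade} of delays, which is a mixed second difference of park times (adding $i$ and adding $j$ interact). The delicacy is that inserting $j$ both raises the ready time entering the downstream busy period, which helps, and could in principle rearrange which flights lie on the critical chain. I would control it by writing each downstream park time in the unrolled max-plus form $t^g_{s_\ell} = \max_{\, m \le \ell}\bigl(a_{s_m} + \sum_{r=m}^{\ell-1}(\tau_{s_r}+b_k)\bigr)$ and checking that the map $x \mapsto \max(a, x+c)$ with $c \ge 0$ is non-decreasing and translation-preserving, so that a larger incoming ready time propagates a weakly larger increment through every subsequent flight; summing these increments over the downstream flights yields the desired inequality and completes the proof.
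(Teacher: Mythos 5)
Your proposal is correct in substance but takes a genuinely different route from the paper. The paper proves the diminishing-returns inequality for nested sets $A \subseteq B$ directly: after the modular $\pi$-term cancels, it compares $\triangle t_u^{g,A\cup\{u\}}$ with $\triangle t_u^{g,B\cup\{u\}}$ and then handles the cascade term by a short verbal case analysis (park times unchanged, or pushed for some $I \subseteq B$, with the claim that each flight of $I\cap A$ is pushed at least as much in $B$ as in $A$ asserted by analogy with the argument for $u$). You instead reduce to the equivalent local condition $f(C\cup\{i\})+f(C\cup\{j\}) \ge f(C)+f(C\cup\{i,j\})$ and telescope over $B\setminus A$, so that only a single insertion into a single environment needs analysis, and you propose to verify the resulting second-difference inequality for the total delay via the unrolled max-plus form $t^g_{s_\ell} = \max_{m\le\ell}\bigl(a_{s_m}+\sum_{r=m}^{\ell-1}(\tau_{s_r}+b_k)\bigr)$. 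This localization is, if anything, more rigorous than the paper at precisely the step the paper glosses over, and your decomposition of the marginal of $i$ into its own delay (identical in $C$ and $C\cup\{j\}$ since $a_i \le a_j$) plus the downstream pushes mirrors the paper's. One caution on your final step: the one-line mechanism you cite is not quite the operative one. As a function of the incoming ready time $x$, each downstream park time has the form $\max(x+P,\,K)$ (convex, slopes in $\{0,1\}$, hence $1$-Lipschitz), so a larger ready time propagates weakly \emph{smaller}, not larger, increments downstream; convexity in $x$ alone compares increments within one fixed set, not across $S$ and $S\cup\{j\}$. What actually closes the argument is the cross-set comparison your unrolled form makes available: inserting $j$ raises the ready-time chain coefficient by exactly $p_j$ while raising every arrival-anchored constant by at most $p_j$ — for the new constant anchored at $a_j$ this uses $a_j \le a_{m_0}$, where $m_0$ is the next flight after $j$'s sorted insertion point — so the new constant $K'$ satisfies $K' \le K + p_j$, and then $\max(\rho'+P+p_j,\,K') - \max(\rho+P+p_j,\,K') \ge \max(\rho'+P,\,K)-\max(\rho+P,\,K)$ for all $\rho' \ge \rho$ by an elementary case check. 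With that comparison substituted for the ``translation-preserving'' gloss, your flight-by-flight displacement monotonicity holds and the whole plan goes through; the trade-off is that your proof is longer but pins down rigorously the critical-chain interaction that the paper's two-case argument only asserts.
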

\begin{proof}[proof of Lemma \ref{submodFunc}.]
First note that we use the expression ``the park time of a flight is pushed to the right" to mean the park time of this flight is delayed and consequently this flight experiences a larger arrival delay. Formally, given a set of flights $A$, the park time of flight $i$ is pushed to the right in another set of flight $B$ if $\Delta t_i^{g,A} \le \Delta t_i^{g,B}$.

Now suppose that $A \subseteq B \subseteq \mathcal{F}'$ and $u \notin \mathcal{F}' \backslash B$, we compare $f(A \cup \{u\}) - f(A)$ against $f(B \cup \{u\}) - f(B)$, 
\begin{align}
f(A \cup \{u\}) - f(A) & = - \sum_{i \in A \cup \{u\}} \triangle t_i^{g, A \cup \{u\}} + \sum_{i \in A \cup \{u\}}\pi_i  - \left( - \sum_{i \in A} \triangle t_i^{g, A} + \sum_{i \in A}\pi_i \right) \notag\\
& =  - \left(\sum_{i \in A} \triangle t_i^{g, A  \cup \{u\}} - \sum_{i \in A} \triangle t_i^{g, A}\right) + (\pi_u - \triangle t_u^{g, A \cup \{u\}}). \label{subA}
\end{align}

Similarly, we have that 
\begin{align}
f(B \cup \{u\}) - f(B) = - \left(\sum_{i \in B} \triangle t_i^{g, B \cup \{u\}} - \sum_{i \in B} \triangle t_i^{g, B}\right) +  (\pi_u- \triangle t_u^{g, B \cup \{u\}}). \label{subB}
\end{align} 

Since $A \subseteq B$, the arrival delay of $u$ in the set of assigned flights $B$ is at least as large as that of $u$ in the set of assigned flights $A$ and then $\triangle t_u^{g, A \cup \{u\}} \le \triangle t_u^{g, B \cup \{u\}}$. Thus $\pi_u - \triangle t_u^{g, A \cup \{u\}} \ge \pi_u- \triangle t_u^{g, B \cup \{u\}}$. For the first term in the expressions (\ref{subA}) and (\ref{subB}), we consider the following cases when accepting an additional flight $u$ to $A$ and $B$:
\begin{enumerate}
  \item The park times remain unchanged for all $i \in B$ and thus $i \in A$. Both terms are zero. We have $f(A \cup \{u\}) - f(A) \ge f(B \cup \{u\}) - f(B)$.
  \item The park times are pushed to the right for a set of flights $I \subseteq B$. If $I \cap A \neq \emptyset$, following similar arguments for $u$, we see that the increase in the arrival delay of a flight $i \in I \cap A$ in the set of assigned flights $B$ is at least as large as that of $i$ in the set of assigned flights $A$. In addition, there are potentially increases in arrival delays of flights in the set $I \cap A^c$. Therefore, the first term in (\ref{subA}) is less negative than that in (\ref{subB}). If $I \cap A = \emptyset$, the first term in (\ref{subA}) is zero while the first term in (\ref{subB}) is non-positive. We have again $f(A \cup \{u\}) - f(A) \ge f(B \cup \{u\}) - f(B)$.
\end{enumerate}

The two cases above verify that when we accept an additional flight $u$, we have $f(A \cup \{u\}) - f(A) \ge f(B \cup \{u\}) - f(B)$ and this shows that $f$ is a submodular function.  
\end{proof}

We want to point out that $f$ is not monotonic in general. Suppose that $A \subseteq B \subseteq \mathcal{F}'$, then 
\begin{align*}
f(B) - f(A) & = - \sum_{i \in B} \triangle t_i^{g, B} + \sum_{i \in B}\pi_i - \left(-\sum_{i \in A} \triangle t_i^{g, A} + \sum_{i \in A}\pi_i\right)\\
& =  - \left(\sum_{i \in B} \triangle t_i^{g, B} - \sum_{i \in A} \triangle t_i^{g, A}\right) + \sum_{i \in B \backslash A} \pi_i.
\end{align*}

The first term $- \left(\sum_{i \in B} \triangle t_i^{g, B} - \sum_{i \in A} \triangle t_i^{g, A}\right)$ is always non-positive by Lemma \ref{subsetDelayLemma}. However, since $\pi_i \ge 0$ for all $i \in \mathcal{F}$ and  $\sum_{i \in B \backslash A} \pi_i \ge 0$, $f(B) - f(A)$ is not necessarily always non-negative or non-positive and thus $f$ is not monotone in general.

Now we can utilize an existing submodular maximization algorithm by \cite{subMax} which is shown below as algorithm \ref{SubmodularApproximation}. In this algorithm, we select each flight in the set of flights $\mathcal{F}'$ with a probability. Let $X_i$ and $Y_i$ be two random sets to be updated in each iteration and we initialize $X_0 = \emptyset$ and $Y_0 = \mathcal{F}'$. We also denote the elements in $\mathcal{F}'$ by $1,2,\ldots,n$.
\begin{algorithm}[H]
\caption{Submodular Maximization($f,\mathcal{F}'$)}\label{SubmodularApproximation}
\begin{algorithmic}[1]
\State $X_0\gets \emptyset, Y_0\gets \mathcal{F}'$
\For{$i = 1,2,\ldots,n$}
\State $a_i\gets f(X_{i-1} \cup \{i\}) - f(X_{i-1})$
\State $b_i\gets f(Y_{i-1} \backslash \{i\}) - f(Y_{i-1})$
\State $a_i^{\prime}\gets \max\{a_i,0\}, b_i^{\prime}\gets \max\{b_i,0\}$
	\State \textbf{with probability} {$a_i^{\prime} / (a_i^{\prime} + b_i^{\prime})^*$} \textbf{do:} \Comment{$^*$ If $a_i^{\prime} = b_i^{\prime} = 0$, we assume $a_i^{\prime} / (a_i^{\prime} + b_i^{\prime}) = 1$}
		\State $X_i\gets X_{i-1} \cup \{i\}, Y_i\gets Y_{i-1}$, 
	\State \textbf{else}\; (with the compliment probability $b_i^{\prime} / (b_i^{\prime} + a_i^{\prime}$))
		\State $X_i\gets X_{i-1}, Y_i\gets Y_{i-1} \backslash \{i\}$
	\State \textbf{end}
\EndFor
\State \Return{$X_n$ (or equivalently $Y_n$)}
\end{algorithmic}
\end{algorithm}

The following theorem establishes a theoretical approximation guarantee of Algorithm \ref{SubmodularApproximation}.
\begin{theorem} \label{subguarantee}
	Let $f: 2^{\mathcal{F}'} \mapsto \mathbb{R}$ defined by (\ref{subfDef}), and $OPT$ be the true optimal solution of the problem (\ref{submaxpro}) and $X_n$ (or equivalently $Y_n$) is the set returned by the algorithm. If $f(\mathcal{F}') \ge 0$, then $\mathbb{E}(f(X_n)) \ge f(OPT)/2$.
\end{theorem}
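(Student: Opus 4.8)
The plan is to recognize Algorithm \ref{SubmodularApproximation} as the randomized double-greedy procedure of \cite{subMax}, and to reproduce its analysis while tracking precisely where non-negativity of $f$ enters. The point is that the global hypothesis $f \ge 0$ used in the original theorem can be replaced by the single inequality $f(\mathcal{F}') \ge 0$ together with the already-established fact $f(\emptyset) = 0$. First I would introduce the coupling set $O_i := (OPT \cup X_i) \cap Y_i$, which interpolates between the two sequences: since $X_i$ grows, $Y_i$ shrinks, and $X_i \subseteq Y_i$ is maintained, one checks $O_0 = OPT$ and $O_n = X_n = Y_n$, and more generally the inclusion chain $X_{i-1} \subseteq O_{i-1} \subseteq Y_{i-1}$ holds throughout.

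Next I would establish the auxiliary inequality $a_i + b_i \ge 0$ for every $i$. Because $X_{i-1} \subseteq Y_{i-1} \setminus \{i\}$, submodularity (Lemma \ref{submodFunc}) gives that the marginal gain of $i$ over the smaller set dominates its marginal gain over the larger set, i.e. $a_i \ge f(Y_{i-1}) - f(Y_{i-1} \setminus \{i\}) = -b_i$. This uses submodularity alone, not non-negativity, and it guarantees the sampling probabilities in lines 6--9 are well defined and that at least one of $a_i', b_i'$ is positive; the degenerate case $a_i' = b_i' = 0$ forces $a_i = b_i = 0$ and is absorbed by the tie-breaking convention in the footnote.

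The heart of the argument, and the step I expect to be the main obstacle, is the per-step lemma: conditioned on the first $i-1$ random choices,
\[
\mathbb{E}\big[f(O_{i-1}) - f(O_i)\big] \;\le\; \tfrac12\,\mathbb{E}\big[(f(X_i) - f(X_{i-1})) + (f(Y_i) - f(Y_{i-1}))\big].
\]
I would prove this by conditioning on whether $i \in OPT$ and on the signs of $a_i, b_i$. In the principal case $a_i, b_i > 0$ the algorithm adds $i$ to $X$ with probability $p = a_i/(a_i+b_i)$, so the right-hand side equals $\tfrac12 (a_i^2 + b_i^2)/(a_i + b_i)$, while the left-hand side is bounded by $a_i b_i/(a_i + b_i)$: if $i \in OPT$ one compares the marginal of $i$ on $O_{i-1}$ against its marginal on $X_{i-1}$ (using $X_{i-1} \subseteq O_{i-1} \setminus \{i\}$), and if $i \notin OPT$ against its marginal on $Y_{i-1} \setminus \{i\}$ (using $O_{i-1} \subseteq Y_{i-1} \setminus \{i\}$). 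The claim then reduces to $a_i^2 + b_i^2 \ge 2 a_i b_i$. The boundary sign cases are deterministic and handled analogously. The delicate part is the careful case bookkeeping together with verifying that none of these marginal comparisons secretly invokes $f \ge 0$; they rely only on submodularity and the inclusions $X_{i-1} \subseteq O_{i-1} \subseteq Y_{i-1}$.

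Finally I would telescope the per-step lemma over $i = 1, \dots, n$ and take total expectation. Using $O_0 = OPT$, $O_n = X_n = Y_n$, $X_0 = \emptyset$, and $Y_0 = \mathcal{F}'$, this yields
\[
f(OPT) - \mathbb{E}[f(X_n)] \;\le\; \tfrac12\big(\mathbb{E}[f(X_n)] + \mathbb{E}[f(Y_n)]\big) - \tfrac12\big(f(\emptyset) + f(\mathcal{F}')\big).
\]
Since $X_n = Y_n$, rearranging gives $\mathbb{E}[f(X_n)] \ge \tfrac12 f(OPT) + \tfrac14\big(f(\emptyset) + f(\mathcal{F}')\big)$. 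I would close by invoking $f(\emptyset) = 0$ (noted just after (\ref{subfDef})) and the hypothesis $f(\mathcal{F}') \ge 0$ to discard the trailing term, obtaining $\mathbb{E}(f(X_n)) \ge f(OPT)/2$. This final step also makes transparent why the weakened hypothesis is enough: non-negativity of $f$ is never needed inside the telescoping, only the non-negativity of the two endpoint values $f(\emptyset)$ and $f(\mathcal{F}')$.
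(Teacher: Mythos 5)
Your proposal is correct and follows essentially the same route as the paper: Appendix~A reproduces the randomized double-greedy analysis of \cite{subMax} with exactly your coupling set $OPT_i = (OPT \cup X_i) \cap Y_i$, the lemma $a_i + b_i \ge 0$, the per-step expectation bound reducing to $a_i^2 + b_i^2 \ge 2a_ib_i$, and the telescoping argument. Your observation that non-negativity of $f$ enters only through the endpoint values $f(X_0) = f(\emptyset) = 0$ and $f(Y_0) = f(\mathcal{F}') \ge 0$ is precisely the ``very minor modification'' the paper makes to the original analysis.
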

\begin{proof}[proof of Theorem \ref{subguarantee}.]
The proof is a very minor modification of what is given in \cite{subMax} and attached in Appendix A for a reference and completeness. The modification is since we require $f(\mathcal{F}') \ge 0$ instead of $f(A) \ge 0$ for all $A \subseteq \mathcal{F}'$. 
\end{proof}

Although it is possible that the condition on $f$ to obtain the theoretical guarantee is not satisfied, the assignment patterns that are generated can still have favorable total net benefits. We discuss later in the numerical experiment section about the implementation of this algorithm and its performance. 

\subsection{Dynamic programming algorithm} \label{DP}
Approximation algorithm can be efficient in generating favorable assignment patterns, however, exact optimal solutions to the pricing problems have to be obtained to prove optimality at each node. This would typically be done using a general integer programming solver. However, this can be slow. To reduce overall solution time, we have developed and now present a much more effective dynamic programming algorithm to achieve this task. We divide up the analyses of the dynamic programming algorithm into three cases based on the input type, namely, general input, rational input, and integer input. We note that the analysis in the case of integer input is a simplification of the analysis used in the case of rational input. In addition, although in many cases, the input is not integer, the algorithm developed in that section can serve as another approximation algorithm. Therefore, we defer the details of the analysis in case of rational input to Appendix B.
\subsubsection{The general case.}
For any gate $k$, we define an auxiliary function $g_i(t)$ to be the maximum total net benefits from optimally accepting flights in the set $\{i,i+1, \cdots, n\}$ where the smallest indexed accepted flight from $\{i,i+1,\ldots,n\}$ can be accepted any time after time $t$. To give a formal definition, 
\begin{equation} \label{f(i,t)def}
g_i(t) = \max_{x_{ik}, x_{i+1,k}, \ldots, x_{nk}} \left\{\sum_{j \ge i} x_{jk} \pi_j - \sum_{j \ge i} (t_j^g - a_j)\; |\;t_j^g \ge t, \;\forall j \in \{i,i+1,\ldots,n\} \right\}.
\end{equation}

As an illustration, we present an example of a set of $8$ arrivals and show in Figure \ref{func_g} the functions $g_i(t)$. In particular, the left plot shows the function $g_1(t)$ while the right plot shows the function $g_8(t)$. \\
\begin{figure}[!tbhp]
\centering
\includegraphics[width=0.9\textwidth]{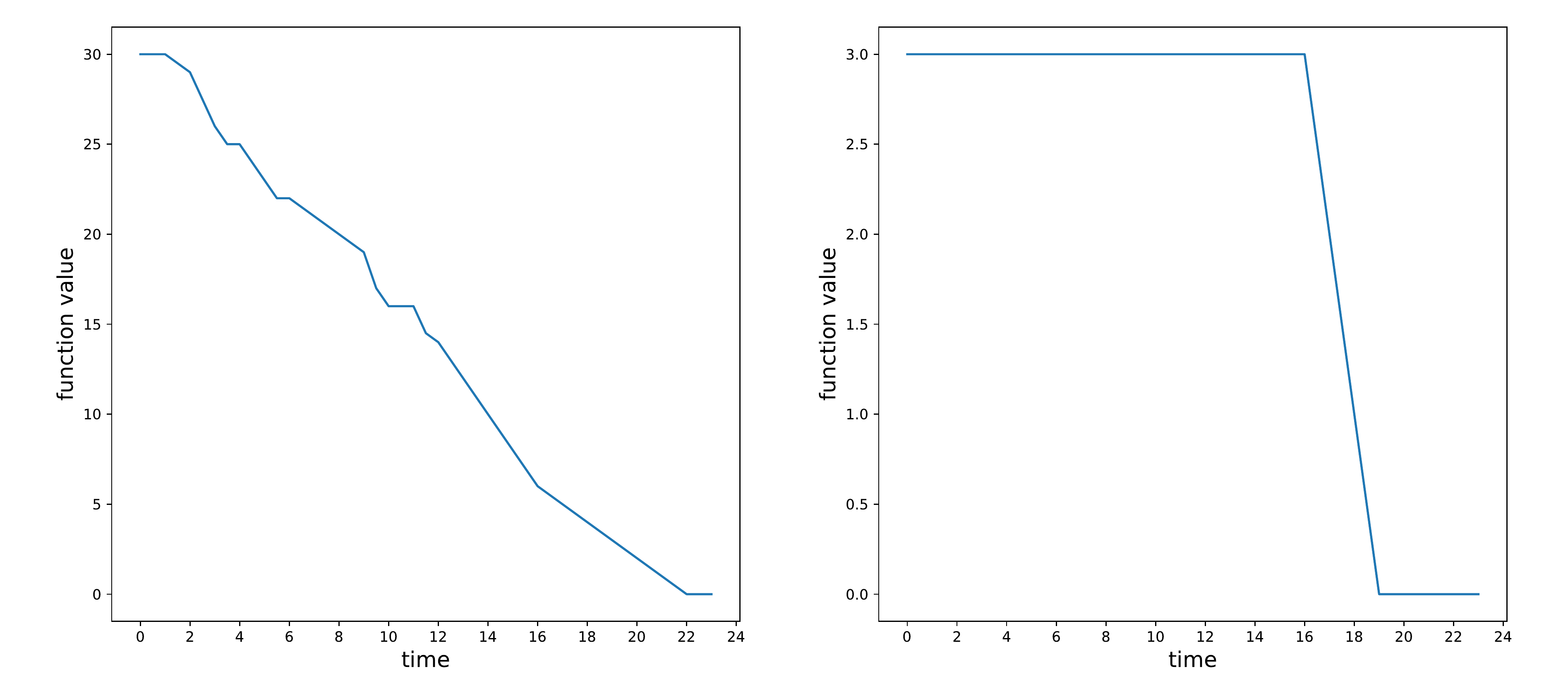}
\caption{Plot of functions $g_1(t)$ and $g_8(t)$.\label{func_g}}
\end{figure}

We observe that the optimal value of the $k^{th}$ pricing problem is $g_1(0)$. We introduce the notion of processing time of a flight $i$ at gate $k$ which consists of the minimum turn time and the buffer time and denote the processing time of flight $i$ by $p_i := \tau_i + b_k$. In the rest of this section, we drop the index $k$ as we are solving the pricing problem for a fixed gate $k$. Based on the definition of $g_i(t)$, we can derive the following recursive formula,
\begin{lemma}\label{recurlemma}
\begin{align}\label{recureq}
g_i(t) = \begin{cases}
0 & \; \text{ if } i \ge n+1\\
g_i(a_i) & \; \text{ if } t \le a_i\\
g_{i+1}(t) & \; \text{ if } t > a_i + \pi_i\\
\max\{(a_i + \pi_i - t) + g_{i+1}(t + p_i), g_{i+1}(t)\} &\; \text{ if }a_i < t \le a_i + \pi_i.
\end{cases} 
\end{align}
\end{lemma}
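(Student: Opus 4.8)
The plan is to read \eqref{f(i,t)def} as a single-gate scheduling problem and to verify the four branches of \eqref{recureq} one at a time, leaning on two structural observations. The first is \emph{monotonicity}: for $t' \ge t$ every schedule feasible for availability $t'$ (one with $t_j^g \ge t'$ for all $j$) is also feasible for availability $t$, so the optimum cannot increase, i.e.\ each $g_i$ is non-increasing in $t$; this I would get directly from the feasible-set containment just described. The second is a \emph{left-shift / exchange} property: because the flights are indexed so that $a_1 \le a_2 \le \cdots \le a_n$, there is an optimal solution defining $g_i(t)$ in which the accepted flights are parked in increasing index order and the smallest-indexed accepted flight is parked at the earliest feasible instant. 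This I would establish by a routine adjacent-swap exchange argument (reordering an out-of-order accepted pair never increases total delay) combined with the monotonicity of the tail.

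With these in hand the cases are short. For $i \ge n+1$ the index set is empty and the objective is $0$ by convention. For $t \le a_i$, sortedness gives $a_j \ge a_i \ge t$ for all $j \ge i$, so the imposed bound $t_j^g \ge t$ is dominated by the implicit constraint $t_j^g \ge a_j$ and is therefore non-binding; the same holds at $t = a_i$, whence $g_i(t) = g_i(a_i)$. For $t > a_i + \pi_i$, accepting flight $i$ forces $t_i^g \ge t$ and thus a strictly negative net contribution $\pi_i - (t_i^g - a_i) \le (a_i + \pi_i) - t < 0$, while simultaneously pushing the availability seen by $\{i+1,\dots,n\}$ from $t$ up to $t_i^g + p_i > t$, which by monotonicity cannot help the tail; hence rejecting flight $i$ is optimal and $g_i(t) = g_{i+1}(t)$. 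Finally, for $a_i < t \le a_i + \pi_i$ I would compare the two admissible decisions: rejecting $i$ yields $g_{i+1}(t)$, while accepting $i$ and left-shifting it to $t_i^g = t$ yields the nonnegative contribution $a_i + \pi_i - t$ plus the tail value $g_{i+1}(t + p_i)$ — parking $i$ any later only decreases both terms by monotonicity. Taking the larger of the two gives the stated $\max\{(a_i + \pi_i - t) + g_{i+1}(t+p_i),\, g_{i+1}(t)\}$.

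The delicate step is the left-shift/exchange property, and it is where I expect the real work to sit: I must argue both that processing accepted flights in index (arrival-time) order is without loss of optimality and that the first accepted flight may be placed at the earliest feasible time, the latter being precisely the fact that licenses collapsing the ``accept'' branch to the single value $t_i^g = t$ and the single recursive call $g_{i+1}(t+p_i)$. Both reductions, and the skip-optimality invoked when $t > a_i + \pi_i$, ultimately rest on the monotonicity of the tail functions $g_{i+1}$; once that is secured, the four cases are exhaustive (they partition the values of $t$ into $t \le a_i$, $a_i < t \le a_i + \pi_i$, and $t > a_i + \pi_i$) and the remaining verifications are routine bookkeeping.
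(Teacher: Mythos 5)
Your four-case verification is essentially the paper's own proof: the paper also argues case by case, treating ``accept'' in the window $a_i < t \le a_i+\pi_i$ as ``park at time $t$'' with immediate gain $a_i+\pi_i-t$ and tail availability $t+p_i$, and it invokes the same tail-monotonicity you state (phrased there as park times being ``pushed to the right''), only implicitly rather than as a named lemma. Making monotonicity explicit via feasible-set containment in (\ref{f(i,t)def}) is a genuine tightening of the paper's rather informal write-up, and your handling of the left-shift of the first accepted flight and of the case $t\le a_i$ (using $a_j \ge a_i$ for $j\ge i$) is correct.

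However, the step you flag as where ``the real work sits'' --- establishing in-index-order processing by an adjacent-swap exchange argument --- is both unnecessary in this model and false as you state it. It is unnecessary because the index order among accepted flights is not a without-loss-of-generality normalization to be proved; it is a hard constraint: (\ref{C3}) is imposed for every pair $i<j$, so if both flights are accepted then $t_j^g \ge t_i^p + b_k \ge t_i^g + \tau_i + b_k = t_i^g + p_i$, and the sequencing is already baked into the feasible region over which $g_i(t)$ optimizes. It is false as a free-standing scheduling claim because ``reordering an out-of-order accepted pair never increases total delay'' fails when processing times differ: with the gate free at time $0$, $a_i = 0$, $a_j = \epsilon$, $p_i = 100$, $p_j = 1$, serving $j$ before $i$ incurs total delay $\epsilon + 1$, while the index order incurs $100 - \epsilon$. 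So if you actually attempted the swap argument you would get stuck; the repair is a one-line appeal to (\ref{C3}) in place of the exchange lemma, after which your proof goes through and coincides with the paper's.
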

\begin{proof}[proof of Lemma \ref{recurlemma}.]
Similar to the proof of Lemma \ref{submodFunc}, we use the expression ``the park time of a flight is pushed to the right" to mean the park time of this flight is delayed and consequently this flight experiences a larger arrival delay.

We consider each case separately.
\begin{enumerate}
\item This is the terminating case. If $i \ge n+1$, we do not have any flights and thus receive a zero net benefit. 
\item Since a flight cannot park earlier than its arrival time, the total net benefits for any $t \le a_i$ are equal to $g_i(a_i)$. 
\item If the current time $t > a_i + \pi_i$, accepting the flight $i$ does not contribute to the total net benefits as $\pi_i - (t - a_i) < 0$. In addition, the park times of flights in the set $\{i+1,i+2,\cdots,n\}$ are potentially pushed to the right in the presence of flight $i$ and their arrival delays are at least as large as the arrival delays in the absence of flight $i$. Consequently, we do not accept flight $i$ at this gate. From the above analyses, we see that $a_i + \pi_i$ is the latest time beyond which we do not accept flight $i$. We denote this time as the end point and $a_i$ as the start point of flight $i$'s acceptance window.
\item In this last case, we compare the total net benefits between accepting and not accepting flight $i$. In the former case, if we accept flight $i$ at the current time $t$, the net benefit gained from flight $i$ is $a_i + \pi_i - t$ and earliest possible park time for flight $i+1$ is $t+p_i$. In the latter case, if we do not accept flight $i$, the earliest possible park time for flight $i+1$ is $t$. 
\end{enumerate} 
\end{proof}
A direct implementation of the formula (\ref{recureq}) recursively considers whether to accept each flight for all flights starting with the first flight. 
Algorithm \ref{DPpricing} below shows an implementation to obtain the value of $g_1(0)$. Note that since we define the function $g_i(t)$ for all continuous values of $t$, it was not immediately clear whether Algorithm $2$ would run in finite time. Thus we verify that Algorithm \ref{DPpricing} runs in finite time next.
\begin{proposition}\label{finitealgorithm}
For any value of $i$ and $t$, Algorithm \ref{DPpricing} runs in finite time. In particular, to evaluate $g_i(t)$ for any $t$, the procedure $\textup{EVALORACLE}(i,t)$ is recursively called $2^{(n-i+1)}$ times in the worst case.
\end{proposition}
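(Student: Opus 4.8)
The plan is to prove both assertions at once by downward induction on the index $i$, reading the branching factor of the recursion directly off the four cases of Lemma~\ref{recurlemma}. Let $C(i)$ denote the worst-case number of terminating (base-case) evaluations generated while computing $g_i(t)$ for an arbitrary $t$, i.e.\ the number of leaves of the recursion tree rooted at $\textup{EVALORACLE}(i,t)$. I would show that in the worst case this tree is a full binary tree satisfying $C(i)=2\,C(i+1)$ with $C(n+1)=1$, which unrolls to $C(i)=2^{(n-i+1)}$; finiteness then follows because the tree has bounded branching and bounded depth, and a finite tree is traversed in finite time.

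For the inductive step I would walk through the four cases of (\ref{recureq}) and count the calls each spawns. The terminating case $i\ge n+1$ returns $0$ with no further call and contributes the single leaf $C(n+1)=2^{0}=1$. Case three ($t>a_i+\pi_i$) sets $g_i(t)=g_{i+1}(t)$, a single recursion at index $i+1$; the branching case four ($a_i<t\le a_i+\pi_i$) sets $g_i(t)=\max\{(a_i+\pi_i-t)+g_{i+1}(t+p_i),\,g_{i+1}(t)\}$, which fires exactly two recursions at index $i+1$, namely $\textup{EVALORACLE}(i+1,t+p_i)$ and $\textup{EVALORACLE}(i+1,t)$. Since two is the maximum any case can produce and this case is attainable, the worst case is realized when every node lands in case four, yielding $C(i)=2\,C(i+1)=2\cdot 2^{(n-i)}=2^{(n-i+1)}$ by the induction hypothesis. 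This simultaneously proves the stated bound and shows it is tight; combinatorially, the $2^{(n-i+1)}$ leaves are exactly the accept/reject decisions over the flights $\{i,i+1,\ldots,n\}$.

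The step that needs the most care is case two ($t\le a_i$), where $g_i(t)=g_i(a_i)$ is a recursion at the \emph{same} index $i$; a priori this is what threatens termination, since the index fails to advance. I would argue that this redirection fires at most once along any root-to-leaf path: after clamping the argument to the start point $a_i$ of the acceptance window, the ensuing evaluation no longer satisfies the strict inequality $t<a_i$ and therefore falls into case three or the branching case four, both of which strictly increment the index. Hence every same-index clamp is immediately followed by an index increase, so it contributes only a bounded overhead and leaves the exponential leaf count unchanged, while the index remains strictly monotone across genuine recursive descent.

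With the clamp controlled, finiteness is immediate: every recursive call carries an index at least $i$, the index never exceeds $n+1$, and it strictly increases at each branching (and after at most one clamp), so the recursion tree has depth at most $n-i+1$ and out-degree at most two. The leaf count $2^{(n-i+1)}$ established above therefore both certifies that $\textup{EVALORACLE}(i,t)$ halts and records the worst-case number of terminating invocations, which is the claim of the proposition. The main obstacle, and the only nonroutine point, is the same-index case two; once it is shown to occur at most once per path the rest is a clean two-line induction.
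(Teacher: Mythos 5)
Your proof is correct and takes essentially the same route as the paper's: a backward induction in which each call to $\textup{EVALORACLE}$ spawns at most two calls at index $i+1$, yielding the doubling recurrence $C(i)=2\,C(i+1)$ and the bound $2^{(n-i+1)}$; indeed you are more careful than the paper, whose proof silently ignores the same-index clamp $g_i(t)=g_i(a_i)$ of case two — the only genuine threat to termination — which you treat explicitly. One small correction to your treatment of that clamp: it can fire more than once along a root-to-leaf path (once at each index $j$ where the running argument falls below $a_j$), so the claim ``at most once per path'' should be replaced by the invariant you in fact state next — every clamp is immediately followed by an index increase, since after clamping to $t=a_j$ the call falls into case three or four (using $\pi_j>0$, guaranteed by the pre-processing of $\mathcal{F}'$) — which bounds the recursion depth by $2(n-i+1)$ with branching factor one at each clamp, leaving the leaf count $2^{(n-i+1)}$ unchanged.
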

\begin{proof}[proof of Proposition \ref{finitealgorithm}.]
The proof is by backward induction. If $i = n$, evaluating the function $g_n(t)$ for any $t$ is equivalent to the procedure $\textup{EVALORACLE}(i,t)$ with input $n$ and $t$ which further calls the procedure $\textup{EVALORACLE}(i,t)$ at most $2$ times with input $n+1$ and $t$ or $n + 1$ and $t+p_n$. This gives the base case. Suppose that the statement is true for $i = n, n-1,\ldots,j + 1$. Now consider $i = j$ and $g_j(t)$ for any $t$, the procedure $\textup{EVALORACLE}(i,t)$ with input $j$ and $t$ further calls the procedure $\textup{EVALORACLE}(i,t)$ at most $2$ times with input $j+1$ and $t$ or $j+1$ and $t+p_i$. The former is equivalent to the value of $g_{j+1}(t)$ and the latter is equivalent to the value of $g_{j+1}(t+p_i)$. The inductive step shows that both call requires at most $2^{n-j}$ further recursive calls to the procedure $\textup{EVALORACLE}(i,t)$. Therefore, $g_i(t)$ for any $t$ requires a total of $2 \cdot 2^{n-j} = 2^{n-j+1}$ recursive calls, which completes the proof.  
\end{proof}
\begin{algorithm}[H]
\caption{Evaluation of $g_i(t)$ and optimal assignment $S$} \label{DPpricing}
\begin{algorithmic}[1]
\State{\textbf{input:} $i$, $t$}
\Procedure{EvalOracle}{$i$, $t$}
\If {$i = n + 1$} \Return $0$, $S = \emptyset$
\Else
	\If{$t \le a_i$} 
		\State \Return{\Call{EvalOracle}{$i$, $a_i$}}
	\ElsIf{$t \ge a_i + \pi_i$} 
		\State \Return{\Call{EvalOracle}{$i+1$, $t$}}, $S \gets S \cup \{0\}$
	\Else
		\If{$(a_i + \pi_i - t) + $\Call{EvalOracle}{$i+1$, $t + p_i$} $\ge$ \Call{EvalOracle}{$i+1$, $t$}}
	 		\State \Return{$(a_i + \pi_i - t) + $\Call{EvalOracle}{$i+1$, $t + p_i$}}, $S \gets S \cup \{1\}$
	 	\Else	 		
	 		\State \Return{\Call{EvalOracle}{$i+1$, $t$}}, $S \gets S \cup \{0\}$
		\EndIf
	\EndIf
\EndIf
\EndProcedure
\State \Return {\Call{EvalOracle}{$i,t$}, $S$}
\end{algorithmic}
\end{algorithm}

\subsubsection{The case of integral input data.} \label{ncapprox}
We now propose an implementation of the dynamic programming algorithm with a running time of $O(nc)$ in the case of integral input data. In Appendix B, we analyzed the case when the data is rational and we constructed the functions $g_i(t)$ in the interval $[0,c]$ by recursively evaluating the functions at the potential breakpoints from the set
\begin{align*}
\{0,e,2e,3e,\ldots,c\} \text{ where }e \in \{1/d,1/2d,1/3d,\ldots,1/(i+1)d\},
\end{align*}
where $d$ is the common denominator. If we now assume the input data are integral, we can further reduce the set to $\{0,1,2,\ldots,c\}$. Formally, we have the following theorem.
\begin{theorem} \label{alterdpthm}
Assume all input data are integral, it suffices to evaluate a function $g_i(t)$ at $t \in \{0,1,2,\ldots,c\}$ to construct the function. 
\end{theorem}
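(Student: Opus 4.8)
The plan is to prove the statement by a straightforward induction on the recursion of Lemma~\ref{recurlemma}, showing that whenever we are asked to evaluate $g_i(t)$ at an \emph{integer} $t$, every recursive sub-call again requests some $g_{i'}(t')$ at an integer $t'$ lying in $\{0,1,\ldots,c\}$. Since the only quantity we ultimately need is the optimal value $g_1(0)$, and $0$ is an integer, this shows that the entire evaluation tree of Algorithm~\ref{DPpricing} (equivalently, the memoized table it fills) touches each $g_i$ only at integer time points. Combined with Proposition~\ref{finitealgorithm}, which guarantees the recursion terminates, the set of reachable arguments is then a \emph{finite} subset of the integers in $[0,c]$, so tabulating each $g_i$ on $\{0,1,\ldots,c\}$ is all that the algorithm ever requires; counting $n$ flights against $c+1$ time points gives the $O(nc)$ bound.

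For the inductive step I would simply read off the four branches of the recursive formula (\ref{recureq}) and check that each sends an integer argument to integer arguments. If $t\le a_i$ the call becomes $g_i(a_i)$, and $a_i\in\mathbb{Z}$ by the integrality hypothesis; if $t> a_i+\pi_i$ the call becomes $g_{i+1}(t)$, preserving the same integer $t$. The only substantive branch is $a_i<t\le a_i+\pi_i$, where we recurse on both $g_{i+1}(t)$ and $g_{i+1}(t+p_i)$; here the key observation is that the processing time $p_i=\tau_i+b_k$ is a sum of integral data and hence itself an integer, so $t+p_i\in\mathbb{Z}$ as well. Thus the lattice of reachable time arguments is closed under the recursion. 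To pin the arguments inside $\{0,1,\ldots,c\}$ I would note that all reachable values are bounded below by $0$ (they are obtained from $t=0$ by resetting to some $a_j\ge 0$ or by adding nonnegative $p_j$) and bounded above by $c$ once $c$ is taken to be the horizon beyond which every $t$ falls into the third branch and contributes nothing.

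The step I expect to be the real subtlety is reconciling this with the behaviour of $g_i$ viewed as a function on all of $\mathbb{R}$. Unlike the other three branches, the maximum in the fourth branch of (\ref{recureq}) can create a breakpoint of $g_i$ wherever the two candidate expressions cross, and because the ``accept'' term $(a_i+\pi_i-t)+g_{i+1}(t+p_i)$ can carry slope $-2$ while $g_{i+1}(t)$ carries slope $0$, such a crossing can land at a half-integer even when all data are integral. Consequently the theorem must \emph{not} be read as ``$g_i$ is affine between consecutive integers''; the content that the $O(nc)$ implementation actually uses is that the recursion launched from the integer value $t=0$ never queries any of these fractional breakpoints. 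I would therefore state the key lemma as the closure property above, and offer the equivalent and perhaps cleaner justification that there is an optimal left-shifted schedule in which every park time $t_j^g$ is integral: each park time equals either some arrival $a_j$ or a predecessor's pushback $t^g+p$, so integrality propagates along the chain. Finally I would present the integer case as the $d=1$ specialization of the rational analysis in Appendix~B, in which the reachable lattice collapses from multiples of $1/d$ down to the integers.
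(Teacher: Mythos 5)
Your proof is correct, and its engine is the same integrality fact the paper uses---every time value the recursion generates is a sum of an integer arrival time $a_j$ and integer processing times $p_j$---but you run the argument from the algorithm side where the paper runs it from the schedule side. The paper's proof fixes an arbitrary set of accepted flights, shows each park time is either some $a_j$ or the predecessor's park time plus its processing time, and concludes that all park times, and hence all acceptance decisions in (\ref{recureq}), occur at integral times; you instead induct on the call tree, showing that the set of arguments reachable from $g_1(0)$ is closed under the branch maps $t \mapsto a_i$, $t \mapsto t$, and $t \mapsto t + p_i$ and stays inside $\{0,1,\ldots,c\}$. These are two views of the same propagation, and your version matches Algorithm \ref{alterapprox} slightly more directly, since it certifies that the integer table is self-contained. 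Your caveat about fractional breakpoints is a genuine refinement over the paper's phrasing: as the Appendix B analysis shows when specialized to $d=1$, the true piecewise-linear $g_i$ can break at multiples of $1/k$ for $k \le i+1$ (e.g.\ at half-integers where a slope $-2$ ``accept'' branch crosses a slope $0$ branch), so the theorem's ``construct the function'' must be read as ``compute every value the $O(nc)$ implementation queries,'' not ``determine $g_i$ on all of $[0,c]$ from integer samples''---a distinction the paper glosses over. One small inconsistency on your end: your closing suggestion to present the integer case as the $d=1$ specialization of Appendix B contradicts your own breakpoint observation, since at $d=1$ that analysis collapses the lattice only to multiples of $1/k$, not to the integers; the collapse to $\{0,1,\ldots,c\}$ requires exactly your closure argument (or the paper's park-time argument), which is why the paper proves Theorem \ref{alterdpthm} separately rather than citing Appendix B.
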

\begin{proof}[proof of Thoerem \ref{alterdpthm}.]
Consider any set of accepted flights, $S$, and let the flights in $A$ be $1, 2, \ldots, |A|$. Following similar arguments in the subsection \ref{approximation}, we can assign the park time of the first flight  to be $a_1$ and the gate becomes available for the second flight at time $\min\{a_2, a_1 + p_1\}$. Therefore, we see that the park time for flight $i$ is given by $\sum_{j=2}^{i-1} \min\{a_j, a_{j-1} + p_{j-1}\} + \sum_{j=2}^{i-1}p_j$ for $i \ge 3$. Note that the park time of flight $i$ is the sum of the park time and processing time of flight $i-1$. Since we assume all input data are integral, the park times for all flights are integral.

Next, we optimally assign the flights $1,2,\ldots,n$ in the set $\mathcal{F}'$ based on the recursive formula (\ref{recureq}). For any flight $j$, if we accept $j$, it can only be accepted at an integral time that is either determined by the accepted flights before $j$ or $a_j$. If we do not accept $j$, we move on to optimally assigning the flights $j+1, j+2, \ldots, n$ and the arguments for $j$ can be applied. Therefore, decisions on whether to accept the flights all occur at integral times and we only need to evaluate $g_i(t)$ at the integral times in $[0,c]$ to construct that $g_i(t)$.  
\end{proof}

Based on the above observation, an implementation of the dynamic programming algorithm with a running time of $O(nc)$ is shown below as Algorithm \ref{alterapprox}. Note that this is an implementation of the backward version of the dynamic programming algorithm. 
\begin{algorithm}[H]
\caption{Alternative implementation of the dynamic programming algorithm} \label{alterapprox}
\begin{algorithmic}[1]
\State Let $g_i(0 \ldots c)$ be a table where $i \in \{1,2,\cdots, n+1\}$. 
\For{$ t \gets c$ to $0$ } {$g_{n+1}(t) \gets 0$, $S_t \gets \emptyset$}
\EndFor
\For{$ i \gets n$ to $1$}
\For{$ t \gets c$ to $0$}
\If{$t \le a_i$} 
\State {$g_i(t) \gets g_i(a_i)$, $S_t \gets S_{a_i}$}
\ElsIf {$t \ge a_i + \pi_i$} 
\State {$g_i(t) \gets g_{i+1}(t)$, $S_t \gets S_t \cup \{0\}$}
\Else
\If {$a_i + \pi_i - t + g_{i+1}(t+p_i) \ge g_{i+1}(t)$ }
\State {$g_i(t) \gets a_i + \pi_i - t + g_{i+1}(t+p_i)$, $S_t \gets S_t \cup \{1\}$}
\Else 
\State {$g_i(t) \gets g_{i+1}(t)$, $S_t \gets S_t \cup \{0\}$}
\EndIf
\EndIf
\EndFor
\EndFor
\State \Return{$g_1(0)$, $S_0$}
\end{algorithmic}
\end{algorithm}
Lastly note that in the cases where not all input data are integers, this algorithm works as an approximation algorithm and provides an alternative to the submodular maximization approximation algorithm. This new approximation algorithm is referred to as the approximative dynamic programming algorithm (ADP) in the computational experiments.  
\subsection{Alternative reformulation of the pricing problems}
Alternatively, the pricing problem can be formulated as a variant of the shortest path problem with time windows known as the shortest path problem with time windows and time costs (SPPTWTC) in which there is an additional linear cost associated with the service start time at each node. Such a problem is studied in details in \cite{ESPPTWlinearnodecost}. To formulate the pricing problems as a SPPTWTC, a source and a sink need to be introduced and each of the flights in the set $\mathcal{F}'$ represents a node in the network. The cost associated with each arc between nodes $i$ and $j$ is given by the negative of the corresponding dual variable value, $-\pi_{ij}$. To solve the SPPTWTC problem, a dynamic programming algorithm is proposed in \cite{ESPPTWlinearnodecost}. This dynamic programming algorithm is derived based on the general labelling algorithm for the shortest path problems with time windows. For a detailed discussion of the labeling algorithm and some relevant extensions, we refer the readers to \cite{ESPPTWlabeling} and \cite{ESPPTWconvexcosts}. One of the key reasons for the effectiveness of this dynamic programming algorithm is the use of upper bounds on the service start times to eliminate labels that are infeasible with respect to the time windows, so that the number of labels created is significantly reduced. However, if we reformulate our pricing problems as SPPTWTC, the lack of upper bounds on the park times leads to an exponential number of labels rendering the dynamic programming algorithm proposed in \cite{ESPPTWlinearnodecost} ineffective. Nonetheless, if one wanted to enforce upper bounds on the park times, one would definitely be able to leverage the work in \cite{ESPPTWlabeling}, \cite{ESPPTWconvexcosts}, and \cite{ESPPTWlinearnodecost}. Therefore, we did not pursue this reformulation further. 
\subsection{Large-sized instances} \label{largesizedinstance}
The total number of arrivals into a busy international airport can be very large and the proposed dynamic programming algorithm can be slow to obtain the optimal assignments. We utilize two ways to further decompose the pricing problem of each gate to tackle large-sized instances. One is a $2$-approximation algorithm and the other is a standard rolling horizon method. As the standard rolling horizon method is common in many applications, we defer the details of implementations to Appendix C.
\subsubsection{Block decomposition approximation.} \label{largeapprox}
When dealing with a large number of arrival flights, the flights can be usually divided into blocks based on the interactions between the flights to reduce the problem sizes. We introduce the idea of an adjacency parameter, denoted by $\overline{\sigma}$. A formal definition of $\overline{\sigma}$ is as follows. 
\begin{definition} \label{adjacencyparameter}
For each flight $i$, let $j$ be the earliest flight after $i$ such that $a_j > a_i + \pi_i + p_i$, and denote $\sigma_i: = \min\{j-i, n-i\}$, then $\overline{\sigma} := \max_{1 \le i \le n} \sigma_i$. 
\end{definition}

Note that this adjacency parameter is likely to differ across the gates and change after a new iteration of the master problem is solved with the updated set $P_k$. The use of the adjacency parameter also appears in the work of \cite{delftbuitendijk}. Now we are ready to give the block decomposition approximation algorithm. 

\begin{algorithm}[H]
\caption{Block decomposition approximation} \label{rollingapproximation}
\begin{algorithmic}[1]
\State \textbf{Input:} adjacency parameter $\overline{\sigma}$
\State Divide the set $\mathcal{F}'$ into $\lfloor n/\overline{\sigma} \rfloor + 1$ blocks such that each of block has $\overline{\sigma}$ flights and the last block has $n - \lfloor n/\overline{\sigma} \rfloor \cdot \overline{\sigma}$ flights. 
\State Solve for the optimal assignment in each of the blocks
\State Compute the sum of the total net benefits of the odd-indexed blocks and even-indexed blocks and let $B$ be the collection of blocks with a larger sum of the total net benefits 
\State Construct an assignment, $S$, by cascading the optimal assignments from the blocks in $B$ 
\State Compute the total net benefits of the assignment $S$, $f(S)$
\State \Return{$S$, $f(S)$}
\end{algorithmic}
\end{algorithm}
Note that we refer to the discussions in Subsection \ref{approximation} for the computation of the total net benefits $f(S)$. We now provide an approximation guarantee for Algorithm \ref{rollingapproximation}.
\begin{theorem} \label{blockguarantee}
Let $S^*$ and OPT be the true optimal assignment and optimal objective value of the pricing problem respectively and let $S$ be the assignment returned by Algorithm \ref{rollingapproximation} and $OPT_S$ be the objective value associated with $S$, then $OPT_S \ge OPT/2$. 	
\end{theorem}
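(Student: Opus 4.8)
The plan is to prove the bound by establishing the two-sided estimate
$OPT_S = \max\{OPT^{odd}, OPT^{even}\} \ge (OPT^{odd} + OPT^{even})/2 \ge OPT/2$,
where $OPT^{odd}$ (resp.\ $OPT^{even}$) denotes the total net benefit of the assignment obtained by cascading the per-block optima over the odd-indexed (resp.\ even-indexed) blocks. The first inequality is immediate, since Step 4 of Algorithm \ref{rollingapproximation} returns the parity class with the larger sum, so the whole argument reduces to two claims: (i) cascading per-block optima over a single parity class is actually optimal over all flights of that class, and (ii) $OPT^{odd} + OPT^{even} \ge OPT$.

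For (i), the engine is the adjacency parameter, and I regard this as the main obstacle, since it is where the somewhat delicate Definition \ref{adjacencyparameter} has to be turned into a genuine no-interaction statement. First I would record the separation property: because $\overline{\sigma} = \max_i \sigma_i \ge \sigma_i$ and the flights are indexed in nondecreasing order of arrival, for every flight $i$ and every flight $m$ with $m \ge i + \overline{\sigma}$ we have $a_m > a_i + \pi_i + p_i$ (handling the boundary case $\sigma_i = n-i$, where the relevant later block is simply empty and the claim is vacuous). Next I would note that in any assignment eligible to be optimal we may assume every accepted flight $i$ parks no later than $a_i + \pi_i$, since otherwise its marginal net benefit $\pi_i - \triangle t_i^{g}$ is negative and, by Lemma \ref{subsetDelayLemma}, dropping it cannot decrease $f$; hence the gate frees by $a_i + \pi_i + p_i$. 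Combining these facts, if flight $i$ lies in block $t$ and $i'$ is the first flight of block $t+2$, then $i' - i > \overline{\sigma} \ge \sigma_i$ forces $a_{i'} > a_i + \pi_i + p_i$, so $i'$ and all later flights of its block arrive after the gate has freed from any accepted flight of block $t$ or earlier. Thus the park-time chain resets at every boundary between consecutive blocks of the same parity, the objective $f$ restricted to one parity class splits as an exact sum over its blocks, and maximizing over the class coincides with cascading the per-block optima computed in Step 3.

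For (ii), I would argue directly from the global optimum $S^*$. Writing $S^*_{odd}$ and $S^*_{even}$ for the restrictions of $S^*$ to the odd- and even-indexed blocks, these sets partition $S^*$. Applying the per-flight form of Lemma \ref{subsetDelayLemma} (precisely the inequality $\triangle t_i^{g,A} \le \triangle t_i^{g,B}$ for $A \subseteq B$ that appears inside its proof) to $S^*_{odd} \subseteq S^*$ and to $S^*_{even} \subseteq S^*$ gives
$f(S^*_{odd}) + f(S^*_{even}) \ge \sum_{i \in S^*}\pi_i - \sum_{i \in S^*}\triangle t_i^{g,S^*} = f(S^*) = OPT$,
because the benefit terms add up exactly over the partition while the delay terms only shrink under restriction. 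Since $S^*_{odd}$ and $S^*_{even}$ are feasible assignments confined to their respective parity classes, optimality over each class (claim (i)) yields $OPT^{odd} \ge f(S^*_{odd})$ and $OPT^{even} \ge f(S^*_{even})$, whence $OPT^{odd} + OPT^{even} \ge OPT$. Taking the larger of the two halves then gives $OPT_S \ge OPT/2$, completing the proof.
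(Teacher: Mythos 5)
Your proof is correct and follows essentially the same route as the paper's: the same block decomposition, the same choice of the better parity class, the same use of the adjacency parameter (via the strict index gap $i'-i > \overline{\sigma}$ and the acceptance-window endpoint $a_i + \pi_i$) to show the cascade is conflict-free, and the same averaging step. The only difference is that you supply details the paper asserts without argument --- notably the inequality $OPT^{odd} + OPT^{even} \ge OPT$ (the paper's $\sum_i OPT_i \ge OPT$, justified there only ``by the construction of the blocks''), which you prove by restricting $S^*$ to the two parity classes and invoking the per-flight delay monotonicity $\triangle t_i^{g,A} \le \triangle t_i^{g,B}$ from the proof of Lemma \ref{subsetDelayLemma}.
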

\begin{proof}[proof of Theorem \ref{blockguarantee}.]
Label all the blocks by $1,2,\ldots, \lfloor n/\overline{\sigma} \rfloor, \lfloor n/\overline{\sigma} \rfloor+1$ and let the optimal objective value of each block be $OPT_i$ for $ i \in [\lfloor n/\overline{\sigma} \rfloor+1]$. By the construction of the blocks, we have that 
\begin{equation}
	\sum_{i \in [\lfloor n/\overline{\sigma} \rfloor+1]} OPT_i \ge OPT.
\end{equation}

Now without loss of generality, we assume that the sum of the total net benefits of the odd-indexed blocks is larger than that of the even-indexed blocks, then we have that
\begin{equation}
	2\sum_{i \text{ is odd}} OPT_i \ge \sum_{i \text{ is odd}} OPT_i + \sum_{i \text{ is even}} OPT_i \ge OPT.
\end{equation}

Now we show that we can construct a valid assignment by just cascading the optimal assignments from the odd-indexed blocks. Let $I_1$, $I_2$ be any two consecutive odd-indexed blocks and let $f_{1}$ be the last flight in block $I_1$ and $f_2$ be the first flight in $I_2$ respectively. From the construction of the blocks, we have that $f_2 - f_1 > \overline{\sigma}$ since they are from two different blocks. In addition, by the definition of the adjacency parameter, we have $a_{f_2} > a_{f_1} + \pi_{f_1} + p_{f_1}$ and, in fact, $a_{f_2} > a_{i} + \pi_{i} + p_{i}$ for any flight $i$ in block $I_1$. Recall that $a_i + \pi_i$ is the end point of flight $i$'s acceptance window beyond which $i$ is not accepted. Therefore, the optimal assignment of the flights in the block $I_1$ does not conflict with whether to accept $f_2$ in the optimal assignment of the flights in the block $I_2$ and cascading the optimal assignments of $I_1$ and $I_2$ forms a valid assignment. Repeating this argument for all the odd-indexed blocks, we see cascading the optimal assignments of those blocks form a valid assignment pattern.

Therefore, we have that
\begin{equation}
OPT_S = \sum_{i \text{ is odd}} OPT_i \ge OPT/2,	
\end{equation} 
which completes the proof. 
\end{proof}
Note that Theorem \ref{blockguarantee} implies that if there exists an assignment pattern with positive total net benefits, the block decomposition approximation algorithm has to return an assignment pattern, $S$, with a positive $OPT_S$ and thus $S$ is a favorable assignment pattern. Furthermore, Algorithm \ref{rollingapproximation} can be improved as follows. Suppose the odd-indexed blocks have a larger sum of the total net benefits and we construct a valid assignment pattern from the odd-indexed blocks. We can still potentially accept more flights from the even-indexed blocks to contribute more net benefits as long as accepting those flights does not violate constraints (\ref{C1}) - (\ref{C3}).\\
Lastly, as the adjacency parameter, $\overline{\sigma}$, is computed based on the interactions between flights, it is not likely to be very large. Most flights park at the gate to get ready for the next flight leg with a relatively short turn time and thus they have limited interactions with other flights that arrive hours later.

\section{Feasible solutions and branching scheme} \label{branching}
\subsection{Feasible solutions}
After optimality is achieved in the linear programming relaxation of the master problem (\ref{colGenObj}) - (\ref{bound}), we solve the master problem with the binary requirements of the decision variables reinstated to obtain a feasible solution. We refer to this problem as the binary program. The objective value of the linear programming relaxation provides a lower bound on the optimal objective value while the binary program provides an upper bound on the optimal objective value. Subsequently, we can study the quality of the binary program solution by the following,
\begin{equation} \label{qualitysol}
gap_{root} = \frac{UB_{root} - LB_{root}}{LB_{root}},	
\end{equation}
where $UB_{root}$ is the upper bound from the binary program and $LB_{root}$ is the lower bound from the linear programming relaxation. In the cases where $LB_{root}$ is zero, we use the absolute gap of $UB_{root}-LB_{root}$ which is denoted by an additional notation of ``(a)" after the numericals. We will use the abbreviations of UB and LB to represent the upper bound from the binary program and lower bound from the linear programming relaxation respectively in the computational experiments section. 

\subsection{Branching on the assignment decisions} 
In many cases, although we can obtain a feasible solution from the binary program, the quality of that solution is poor. Thus, we propose a branching rule on the assignment decisions to obtain better solutions. Formally, let $z^*$ be an optimal solution to the linear programming relaxation of the master problem (\ref{colGenObj}) - (\ref{bound}).  The corresponding assignment decision for each flight $i$ given by 
\begin{equation} \label{optimalX}
y_{ik}^{*} : = \sum_{p \in P_k} \delta_{ip}^k z_p^{k*}, \quad \forall i \in \mathcal{F}, \; k \in \mathcal{G}.
\end{equation}

Consider a fractional $z^*$, as $\delta_{ip}^k$ is either $0$ or $1$,  there may exist $0 < y_{ik}^* < 1$ for a particular $i$ and $k$. When such a fractional assignment decision occurs, we denote the $i$ and $k$ values by $\hat{i}$ and $\hat{k}$. We present a branching scheme on this assignment decision which is adapted from \cite{Ryan}. We force $y_{\hat{i}\hat{k}} = 1$ on the left branch and $y_{\hat{i}\hat{k}} = 0$ on the right branch by adding valid inequalities. In particular, flight $\hat{i}$ can be forced to use gate $\hat{k}$ by the following inequality,
\begin{equation}
\sum_{p \in P_{\hat{k}}} (1 - \delta_{\hat{i}p}^{\hat{k}}) z_p^{\hat{k}}+ \sum_{k \in \mathcal{F} \backslash \hat{k}} \sum_{p \in P_k} \delta_{\hat{i}p}^k z_p^k \le 0.
\end{equation}

On the other side of the branch where flight $\hat{i}$ can be forced not to use gate $\hat{k}$ by the following inequality,
\begin{equation}
\sum_{p \in P_{\hat{k}}} \delta_{\hat{i}p}^{\hat{k}} z_p^{\hat{k}}\le 0.
\end{equation}

Note that after the branching constraints are added, the objective functions in the pricing problems have to be updated to incorporate the dual informations of these constraints. By updating the dual variables of the constraints (\ref{coverConstraint}) and (\ref{availConstraint}), we can keep the objective functions in the same format. Let the dual variables to the branching constraints be $\lambda_{\hat{i}\hat{k}}$. On the left branches, if $k = \hat{k}$, we have the new objective function as
\begin{align}
\min \; & \sum_{i \in \mathcal{F}} (t_i^{g}-a_i) - \sum_{i \in \mathcal{F}} x_{ik}\pi_i - \mu_k - (1 - x_{\hat{i} k}) \lambda_{\hat{i}\hat{k}} \notag \\
\Leftrightarrow
\min \; & \sum_{i \in \mathcal{F}} (t_i^{g}-a_i) - \left(\sum_{i \in \mathcal{F} \backslash \hat{i}} x_{ik}\pi_i + x_{\hat{i} k} (\pi_{\hat{i}} + \lambda_{\hat{i}\hat{k}})\right) - (\mu_k + \lambda_{\hat{i}\hat{k}}).
\end{align}

When $k \neq \hat{k}$, the new objective function becomes
\begin{align} \label{newobjectbranching}
\min \; & \sum_{i \in \mathcal{F}} (t_i^{g}-a_i) - \sum_{i \in \mathcal{F}} x_{ik}\pi_i - \mu_k - x_{\hat{i} k} \lambda_{\hat{i}\hat{k}} \notag \\
\Leftrightarrow
\min \; & \sum_{i \in \mathcal{F}} (t_i^{g}-a_i) - \left(\sum_{i \in \mathcal{F} \backslash \hat{i}} x_{ik}\pi_i + x_{\hat{i} k}(\pi_{\hat{i}} + \lambda_{\hat{i}\hat{k}}) \right)- \mu_k.
\end{align}

Now on the right branches, if $k = \hat{k}$, we obtain the same objective function as shown in (\ref{newobjectbranching}). When $k \neq \hat{k}$, the objective function remains unchanged.

We want to point out that the dynamic programming and approximation algorithms have to be adapted at a node where some branching constraints have been added, otherwise the assignment patterns that are generated can potentially violate the branching constraints. In particular, it is easy to modify the pricing problem methods for the right branches. If flight $\hat{i}$ is forced not to use gate $\hat{k}$, we can remove flight $\hat{i}$ from the subset of flights $\mathcal{F}'$ during the pre-processing. However, it is more difficult on the left branches when we force flight $\hat{i}$ to use gate $\hat{k}$. As flight $\hat{i}$ can be accepted at any time in the interval, $[a_{\hat{i}}, a_{\hat{i}} + \pi_{\hat{i}}]$, we need to first determine the optimal time to accept flight $\hat{i}$ after which the rest of the flights in the set $\mathcal{F}'$ can be optimally assigned. This is more difficult to implement because the time in $[a_{\hat{i}}, a_{\hat{i}} + \pi_{\hat{i}}]$ takes continuous values. To address this issue, we restrict the branching options in the dynamic program based on assignment decisions and then solve the pricing problems to optimality using a standard solver. While it can be time consuming for larger instances to solve pricing problems directly, we observed in the experiments that branching is often not needed for those larger instances and consequently this modification does not impact the performances. On the other hand, for the smaller instances where branching is needed, using a solver does not slow down the whole procedure significantly and we can still obtain favorable results.

\section{Computational experiments} \label{numericalexp}
\subsection{Instance generation and initialization}
We test the proposed methods on randomly generated instances and instances derived from real world data. The detailed information about each instance is reported in Table \ref{instanceSize} and a summary of the real world operational data is presented in Table \ref{ATLinfo}. For the randomly generated instances, we generate the inter-arrival time of two consecutive flights based on a uniform distribution such that each gate has an arriving flight every $75$ to $180$ minutes on average depending on the size of the instances. We use the minimum turn time data from a major U.S. airline. Furthermore, we generate each gate with a type which determines whether it can accommodate heavy aircraft, a set of eligible airlines, and a buffer time. The buffer times are set to be identical in our experiments for simplicity, but they can be adjusted accordingly if needed. The compatibilities data, $\alpha_{ik}$, used in constraints (\ref{MIPC6}) and (\ref{compatibility}) as well as pre-processing step of the pricing problem algorithms, can be computed based on the aircraft type, the gate type, and the set of eligible airlines.

For the real world instances, we use arrivals from multiple days in 2019 before the COVID-19 pandemic from the Atlanta Hartfield-Jackson Airport, the busiest airport by passenger traffic. The data are obtained from the U.S. Bureau of Transportation Statistics website. Note that for instances that do not use arrivals in the whole 24 hours' period, we report the time interval during which the arrivals are used. As it is difficult to obtain the precise information about the gates, we generate the gates in a similar way as described above, but with an additional consideration. As very large percentage of arrivals into ATL are Delta Air Lines (DAL) flights and DAL operates many gates at ATL, a large number of gates allow DAL flights.

Once we have the set of arrivals and the set of gates for an instance, we initialize the instance. For each of the test instance, the set of feasible patterns $P_k$ for all gate $k$ have to be initialized to contain at least one pattern to satisfy the availability constraint (\ref{availConstraint}) in the master problem. Moreover, the union of $P_k$ has to satisfy the covering constraint (\ref{coverConstraint}). Since we start with empty $P_k$, we provide each gate a feasible assignment pattern by randomly assigning each flight to a compatible gate to have an overall feasible assignment. Consequently, at the beginning of the proposed procedure, there is an assignment pattern for each gate. We note that although this is a simple initialization procedure, we do not expect any more complex initialization procedure could show significant improvements. In addition, our termination criteria is a relative gap of $2\%$ or an absolute gap of $0.5$ (in case the optimal objective of the root node LP relaxation is $0$.

\begin{longtable}{ccccccc}
\caption{Instance information.\label{instanceSize}}\\
\hline
no. (name) & size & source & no. of flights, gates & flight/gate & inter-arr(min.)\\
\hline\hline
1 (f30g5s) & small & synthetic & 30 flights, 5 gates & $6.00$ & $12.07$\\
2 (f30g10s) & small & synthetic & 30 flights, 10 gates & $3.00$ & $9.66$\\
3 (f50g10s) & small & synthetic & 50 flights, 10 gates & $5.00$ & $8.53$\\
4 (f50f20s) & small & synthetic & 50 flights, 20 gates & $2.50$ & $4.37$\\
5 (f100g35s) & small & synthetic & 100 flights, 35 gates & $2.86$ & $2.72$\\
6 (f100g50s) & small & synthetic & 100 flights, 50 gates & $2.00$ & $1.75$\\\hline\hline
7 (f150g50s1) & moderate & synthetic & 150 flights, 50 gates & $3.00$ & $2.01$\\ 
8 (f150g50s2) & moderate & synthetic & 150 flights, 50 gates & $3.00$ & $2.30$\\
9 (f150g50s3) & moderate & synthetic & 150 flights, 50 gates & $3.00$ & $2.10$\\\hline
10 (f200g100a1) & moderate & 10:00-13:12, 08/23/19 & 200 flights, 100 gates & $2.00$ & $0.96$\\
11 (f200g100a2) & moderate & 13:00-15:51, 08/23/19 & 200 flights, 100 gates & $2.00$ & $0.86$\\\hline
12 (f300g150s1) & moderate & synthetic & 300 flights, 150 gates & $2.00$ & $0.51$\\ 
13 (f300g150s2) & moderate & synthetic & 300 flights, 150 gates & $2.00$ & $0.49$\\
14 (f300g150s3) & moderate & synthetic & 300 flights, 150 gates & $2.00$ & $0.48$\\\hline
15 (f300g150a1) & moderate & 10:00-14:31, 08/23/19 & 300 flights, 150 gates & $2.00$ & $0.90$\\
16 (f300g150a2) & moderate & 13:00-17:47, 08/23/19 & 300 flights, 150 gates & $2.00$ & $0.95$\\\hline\hline
17 (f800g200s1) & large & synthetic & 800 flights, 200 gates & $4.00$ & $0.77$\\
18 (f800g200s2) & large & synthetic & 800 flights, 200 gates & $4.00$ & $0.73$\\
19 (f800g200s3) & large & synthetic & 800 flights, 200 gates & $4.00$ & $0.75$\\\hline
20 (f800g200a1) & large & 10:00-23:13, 08/19/19 & 800 flights, 200 gates & $4.00$ & $0.99$\\
21 (f800g200a2) & large & 10:00-23:13, 08/20/19 & 800 flights, 200 gates & $4.00$ & $0.99$\\
22 (f800g200a3) & large & 10:00-23:41, 08/21/19 & 800 flights, 200 gates & $4.00$ & $1.02$\\
23 (f800g200a4) & large & 10:00-22:36, 08/22/19 & 800 flights, 200 gates & $4.00$ & $0.95$\\
24 (f800g200a5) & large & 10:00-22:22, 08/23/19 & 800 flights, 200 gates & $4.00$ & $0.93$\\\hline
25 (f1000g200s1) & large & synthetic & 1000 flights, 200 gates & $5.00$ & $0.93$\\
26 (f1000g200s2) & large & synthetic & 1000 flights, 200 gates & $5.00$ & $0.89$\\
27 (f1000g200s3) & large & synthetic & 1000 flights, 200 gates & $5.00$ & $0.89$\\\hline
28 (f1000g200a1) & large & 6:00-21:50, 08/19/19 & 1000 flights, 200 gates & $5.00$ & $0.95$\\
29 (f1000g200a2) & large & 6:00-21:41, 08/20/19 & 1000 flights, 200 gates & $5.00$ & $0.94$\\
30 (f1000g200a3) & large & 6:00-21:37, 08/21/19 & 1000 flights, 200 gates & $5.00$ & $0.94$\\
31 (f1000g200a4) & large & 6:00-21:21, 08/22/19 & 1000 flights, 200 gates & $5.00$ & $0.92$\\
32 (f1000g200a5) & large & 6:00-21:21, 08/23/19 & 1000 flights, 200 gates & $5.00$ & $0.92$\\
\hline
33 (f1113g192a19) & large & 08/19/19 & 1113 flights, 192 gates & $5.80$ & $1.28$\\
34 (f1095g192a20) & large & 08/20/19 & 1095 flights, 192 gates & $5.70$ & $1.30$\\
35 (f1092g192a21) & large & 08/21/19 & 1092 flights, 192 gates & $5.69$ & $1.31$\\
36 (f1125g192a22) & large & 08/22/19 & 1125 flights, 192 gates & $5.86$ & $1.28$\\
37 (f1125g192a23) & large & 08/23/19 & 1125 flights, 192 gates & $5.86$ & $1.27$\\\hline
\end{longtable}

\begin{longtable}{ccc}
\caption{Summary of arrivals at ATL.\label{ATLinfo}}\\
\hline 
date. & no. of gates & no. of arrivals \\
\hline\hline
Aug 19, 2019 & $192$ & $1113$ ($687$ Delta, $39$ American, $8$ United, $379$ Other)\\
Aug 20, 2019 & $192$ & $1095$ ($681$ Delta, $32$ American, $9$ United, $373$ Other)\\
Aug 21, 2019 & $192$ & $1092$ ($675$ Delta, $35$ American, $10$ United, $372$ Other)\\
Aug 22, 2019 & $192$ & $1125$ ($684$ Delta, $40$ American, $11$ United, $390$ Other)\\
Aug 23, 2019 & $192$ & $1125$ ($684$ Delta, $39$ American, $11$ United, $391$ Other)\\\hline
\end{longtable}
\subsection{Software and hardware}
For the implementation, the pricing problem algorithms are implemented in Python. Gurobi (version 9.1) is used whenever a commercial solver is needed. The computations are performed on an Unix system with a $12$-core CPU and $16$GB RAM. We also set a limit of $8$ hours for each instance.
\subsection{Computational results}
For a complete comparison, the performance of the compact mixed integer programming (MIP) formulation (\ref{MIPO1}) - (\ref{MIPcontin}) is considered using the small-sized instances and shown in Table \ref{Mip}. The formulation is solved using the Gurobi solver. As we have noted before, the MIP formulation is not ideal for this problem as evidenced in Table \ref{Mip}.\\

\begin{longtable}{ccccc}
\caption{Performances of the MIP formulation.\label{Mip}}\\
\hline
method & instance no. & time(sec.) & incumbent obj. & best bound\\
\hline\hline
\multirow{6}{5em}{\centering{MIP}} & 1 (f30g5s) & 339.78 & 1203.26 & 1203.26\\
& 2 (f30g10s) & 6948.33 & 258.53 & 258.53\\ 
& 3 (f50g10s) & 5347.46 & 74.73 & 74.73\\ 
& 4 (f50f20s) & incomplete & 18.64 & 0.00\\
& 5 (f100g35s) & incomplete & 1.66 & 0.00\\
& 6 (f100g50s) & incomplete & 1.14 & 0.00\\\hline
\end{longtable}
For the following results, note that we proposed to obtain a feasible solution at the root node by reinstating the binary requirement and compute the quality of the solution by (\ref{qualitysol}). In addition, we use the following notations to report results in tables:
\begin{itemize}
	\item ct: computation time in second;
	\item rt: computation time spent on the root node in second;
	\item node(s): number of nodes in the branch-and-bound tree searched to obtain the reported solution, where ``1" represents the root node.
\end{itemize}
Additionally, we use the notations ``$LB_{root}$", ``$UB_{root}$", and ``$gap_{root}$" introduced previously. Note that these values are obtained at the root node.
\subsubsection{Small-sized instances.}
It is important to note that we can combine the pricing problem algorithms presented in Section \ref{solvingpricing} in our implementation. For the small-sized instances, we test the following ways: 
\begin{enumerate}
	\item Gurobi solver (S)
	\item Dynamic programming algorithm (DP)
	\item $70$ iterations of submodular maximization followed by the dynamic programming algorithm (SM+DP)
	\item $25$ iterations of approximative dynamic programming algorithm followed by the dynamic programming algorithm (ADP+DP).
\end{enumerate}

We run a fixed number of iterations of both SM and ADP across different instances as discussed above, and no attempt has been made to tune these parameters. For a particular class of instances, tuning these parameters may provide even better results.

In the solver option (Option 1 above), specifically, we set the Gurobi PoolSearchMode parameter to $2$ and request the solver to provide up to $75$ feasible solutions and add as many as possible to the  master problem. The updated solver parameters does not deteriorate the performance of the solver as we observe the solver very rarely performs extra computations after the optimal solution is found to fill the feasible solution pool. These extra feasible solutions are likely assignment patterns with favorable reduced costs and can potentially reduce the number of times the pricing problems are solved.

The detailed computational results for the small-sized instances are shown in Table \ref{pricingCombinations}. We see that our proposed approaches out-perform the Gurobi solver (S) in all six instances, even with just the dynamic programming algorithm alone (DP). While the approximation algorithms are designed to provide additional improvements relative to the dynamic programming algorithm, however, they do not offer any substantial boost in these small instances and, on the contrary, we see the dynamic programming algorithm alone achieves better computation time. In addition, the binary program solutions obtained with different methods differ in some of instances. This is likely due to three methods, namely, DP, SM, and ADP, are generating different patterns. In general, if we increase the size of the instance, it becomes more difficult to solve. We observe that the flight to gate ratio has an impact on the time. In particular, although instance $1$ is smaller in size than instances $2$ and $4$ are, it takes longer to solve than both instances $2$ and $4$ do. The flight to gate ratio determines how congested the gates are. The higher the ratio, the larger the number of flights each gate on average has to accept. Instances with higher ratio require more delicate assignments. Furthermore, in contrast to the worse case theoretical analysis, the dynamic programming algorithm performs well. This is likely due to the fact that the inter-arrival times are much shorter than the processing times and consequently many recursive calls to the evaluation oracle have input $t$ beyond the corresponding flights' acceptance windows. \\
\begin{longtable}{cccccccccc}
\caption{Pricing problem methods on small-sized instances.\label{pricingCombinations}}\\
\hline
instance & methods & ct(sec.) & rt(sec.) & $LB_{root}$ & $UB_{root}$ & $gap_{root}$($\%$) & final obj. & node(s) \\
\hline
\hline
\multirow{4}{6em}{\centering{1\\(f30g5s)}} & S & 130.71 & 130.71 & 1203.26 & 1203.26 & 0.00 & 1203.26 & 1\\
& DP & 68.83 & 68.83 & 1203.26 & 1203.26 & 0.00 & 1203.26 & 1\\
& SM+DP & 48.46 & 27.52 & 1203.26 & 1448.98 & 20.4 & 1203.26 & 3\\
& ADP+DP & 33.64 & 33.64 & 1203.26 & 1203.26 & 0.00 & 1203.26 & 1\\\hline
\multirow{4}{6em}{\centering{2\\(f30g10s)}} & S & 26.52 & 26.52 & 258.53 & 258.53 & 0.00 & 258.53 & 1\\
& DP & 2.15 & 2.15 & 258.53 & 258.53 & 0.00 & 258.53 & 1\\
& SM+DP & 0.63 & 0.63 & 258.53 & 258.53 & 0.00 & 258.53 & 1\\
& ADP+DP & 3.52 & 3.52 & 258.53 & 258.53 & 0.00 & 258.53 & 1\\\hline
\multirow{4}{6em}{\centering{3\\(f50g10s)}} & S & 91.85 & 91.85 & 74.73 & 74.73 & 0.00 & 74.73 & 1\\
& DP & 39.86 & 25.52 & 74.73 & 77.84 & 4.00 & 74.73 & 5\\
& SM+DP & 38.60 & 26.73 & 74.73 & 80.79 & 7.50 & 74.73 & 5\\
& ADP+DP & 30.79 & 30.79 & 74.73 & 74.73 & 0.00 & 74.73 & 1\\\hline
\multirow{4}{6em}{\centering{4\\(f50f20s)}} & S & 47.63 & 47.63 & 18.64 & 18.64 & 0.00 & 18.64 & 1\\
& DP & 1.72 & 1.72 & 18.64 & 18.64 & 0.00 & 18.64 & 1\\
& SM+DP & 1.44 & 1.44 & 18.64 & 18.64 & 0.00 & 18.64 & 1\\
& ADP+DP & 10.07 & 10.07 & 18.64 & 18.64 & 0.00 & 18.64 & 1\\\hline
\multirow{4}{6em}{\centering{5\\(f100g35s)}} & S & 1495.42 & 1495.42 & 1.66 & 1.66 & 0.00 & 1.66 & 1\\
& DP & 189.61 & 37.92 & 1.66 & 1.88 & 11.70 & 1.66 & 26\\
& SM+DP & 20.67 & 20.67 & 1.66 & 1.66 & 0.00 & 1.66 & 1\\
& ADP+DP & 46.89 & 46.89 & 1.66 & 1.66 & 0.00 & 1.66 & 1\\\hline
\multirow{4}{6em}{\centering{6\\(f100g50s)}} & S & 1893.26 & 1893.26 & 1.14 & 1.14 & 0.00 & 1.14 & 1\\
& DP & 17.89 & 17.89 & 1.14 & 1.14 & 0.00 & 1.14 & 1\\
& SM+DP & 10.06 & 10.06 & 1.14 & 1.14 & 0.00 & 1.14 & 1\\
& ADP+DP & 29.67 & 29.67 & 1.14 & 1.14 & 0.00 & 1.14 & 1\\\hline
\end{longtable}

\subsubsection{Moderate-sized instances.}
Next we show the computational results for the moderate-sized instances. As we have seen that the solver option is not as effective as other options are in solving the small-sized instances, only the options of DP, SM+DP, and ADP+DP are considered in this set of experiments. As we pointed out previously, we run $70$ iterations of SM and $25$ iterations of ADP across all instances.

The results are shown in Table \ref{moderateInstance}.  We see the same trend of longer time taken to solve an instance of larger size. Since the flight to gate ratios are small for these moderate-sized instances, we do not see any obvious impact of the ratio. The boost in performances from the submodular maximization approximation algorithm is much more obvious in the moderated-sized instances and in some cases, the approximative dynamic programming algorithm improves the computation time. The running time of either approximation algorithm increases linearly with the size of the set of flights. Given a larger set of flights, each iteration of the approximation algorithm takes much less time compared to each iteration of the dynamic programming algorithm. Nonetheless, we observe that the submodular maximization out-performs the approximative dynamic programming in all instances as the assignment patterns that are generated by the submodular maximization are usually of better total net benefits than those that are generated by the approximative dynamic programming. Moreover, we observe small variations in the times taken across instances of the same size and this suggests that besides the sizes of the set of flights and the set of gates, other factors associated with the flights and the gates can complicate the problem. In addition, although we see that even though some of  the synthetic instances and real world instances have the same number of flights and gates, the performances are very different and it is likely due to arrivals in the real world instances are much more complex than those in the synthetic instances.\\

\begin{longtable}{ccccccccc} 
\caption{Pricing problem methods on moderate-sized instances.\label{moderateInstance}}\\	
\hline
instance & methods & ct(sec.) & rt(sec.) & $LB_{root}$ & $UB_{root}$ & $gap_{root}$($\%$) & final obj. & node(s) \\
\hline
\hline
\multirow{3}{6em}{\centering{7\\(f150g50s1)}} & DP & 770.74 & 479.53 & 0.0 & 0.69 & 0.69(a) & 0.0 & 82\\
& SM+DP & 346.58 & 346.58 & 0.0 & 0.0 & 0.00(a) & 0.0 & 1\\
& ADP+DP & 985.58 & 985.58 & 0.0 & 0.22 & 0.22(a) & 0.22 & 1\\\hline
\multirow{3}{6em}{\centering{8\\(f150g50s2)}} & DP & 2108.45 & 1866.29 & 0.90 & 1.11 & 23.3 & 0.9 & 23\\
& SM+DP & 486.85 & 156.04 & 0.90 & 5.32 & 491 & 0.9 & 50\\
& ADP+DP & 1034.31 & 631.49 & 0.9 & 1.90 & 52.6 & 0.9 & 50\\\hline
\multirow{3}{6em}{\centering{9\\(f150g50s3)}} & DP & 1185.55 & 1185.55 & 10.74 & 10.74 & 0.00 & 10.74 & 1\\
& SM+DP & 439.70 & 439.70 & 10.74 & 10.74 & 0.00 & 10.74 & 1\\
& ADP+DP & 498.51 & 498.51 & 10.74 & 10.74 & 0.0 & 10.74 & 1\\\hline
\multirow{3}{6em}{\centering{10\\(f200g100a1)}} & DP & 5002.04 & 5002.04 & 0.0 & 0.0 & 0.00(a) & 0.0 & 1\\
& SM+DP & 929.45 & 929.45 & 0.0 & 0.0 & 0.00(a) & 0.0 & 1\\
& ADP+DP & 4689.39 & 4689.39 & 0.0 & 0.0 & 0.00(a) & 0.0 & 1\\\hline
\multirow{3}{6em}{\centering{11\\(f200g100a2)}} & DP & 2883.41 & 2883.41 & 0.0 & 0.0 & 0.00(a) & 0.0 & 1\\
& SM+DP & 570.75 & 570.75 & 0.0 & 0.0 & 0.00(a) & 0.0 & 1\\
& ADP+DP & 3168.33 & 3168.33 & 0.0 & 0.0 & 0.00(a) & 0.0 & 1\\\hline
\multirow{3}{6em}{\centering{12\\(f300g150s1)}} & DP & 8918.69 & 8918.69 & 349.10 & 349.10 & 0.00 & 349.10 & 1\\
& SM+DP & 2833.93 & 2833.93 & 349.10 & 349.10 & 0.00 & 349.10 & 1\\
& ADP+DP & 6891.40 & 6891.40 & 349.10 & 349.10 & 0.00 & 349.10 & 1 \\\hline
\multirow{3}{6em}{\centering{13\\(f300g150s2)}} & DP & 7927.31 & 7927.31 & 203.00 & 203.00 & 0.00 & 203.00 & 1\\
& SM+DP & 3111.97 & 3111.97 & 203.10 & 203.10 & 0.00 & 203.10 & 1\\
& ADP+DP & 6284.43 & 6284.43 & 203.10 & 203.10 & 0.00 & 203.10 & 1\\\hline
\multirow{3}{6em}{\centering{14\\(f300g150s3)}} & DP & 7830.18 & 7830.18 & 414.12 & 414.12 & 0.00 & 414.12 & 1\\
& SM+DP & 4570.69 & 4570.69 & 414.12 & 414.12 & 0.00 & 414.12 & 1\\
& ADP+DP & 8734.09 & 8734.09 & 414.12 & 414.12 & 0.00 & 414.12 & 1\\\hline
\multirow{3}{6em}{\centering{15\\(f300g150a1)}} & DP & 25132.32 & 25132.32 & 0.0 & 0.0 & 0.00(a) & 0.0 & 1\\
& SM+DP & 14032.56 & 14032.56 & 0.0 & 0.0 & 0.00(a) & 0.0 & 1\\
& ADP+DP & 23449.23 & 23449.23 & 0.0 & 0.0 & 0.00(a) & 0.0 & 1\\\hline
\multirow{3}{6em}{\centering{16\\(f300g150a2)}} & DP & 15249.75 & 15249.75 & 0.0 & 0.0 & 0.00(a) & 0.0 & 1\\
& SM+DP & 13052.81 & 13052.81 & 0.0 & 0.0 & 0.00(a) & 0.0 & 1\\
& ADP+DP & 19874.84 & 19874.84 & 0.0 & 0.0 & 0.00(a) & 0.0 & 1\\\hline
\end{longtable}

\subsubsection{Large-sized instances.}
Following the experiments on the moderate-sized instances, we move on to the large-sized instances. As we discussed in the Subsection \ref{largesizedinstance}, we can utilize the block decomposition approximation and rolling horizon framework to tackle the large-sized instances. The horizon size and window size are important parameters in the rolling horizon framework. We first perform parametric studies to understand how the horizon size and window size affect the performances. We vary the horizon size and window size to test the performances of the rolling horizon method on random large-sized instances. We observe that for any window size, smaller horizon sizes result in better performances. However, very small horizon size does not further reduce the time taken as the quality of the assignment patterns that are generated under small horizon sizes is poor and more iterations are needed to reach the optimality. 

For the large-sized instances, we again have a few possible ways to solve the pricing problems because we can either use a fixed or a dynamically determined horizon size for the rolling horizon method and also utilize the approximation algorithms. Here is a detailed breakdown: 
\begin{enumerate}
	\item Rolling horizon method (horizon size: $\overline{\sigma}$, window size: $1$) (RHD)
	\item Rolling horizon method (horizon size: $20$, window size: $1$) (RHF)
	\item Rolling horizon method (horizon size: $\min(20,\overline{\sigma})$, window size: $1$) (RHM)
	\item Submodular maximization if $\overline{\sigma} > 60$ in the first 25 iterations and rolling horizon method otherwise (horizon size: $\min(20,\overline{\sigma})$, window size: $1$) (SM+RHM)
	\item $30$ iterations of block decomposition approximation followed by the rolling horizon method (horizon size: $\overline{\sigma}$, window size: $1$) (BD+RHD)
\end{enumerate}
where $\overline{\sigma}$ is the adjacency parameter that depends on the values of the dual variables as described in Subsection \ref{largeapprox}. Furthermore, submodular maximization can generally provide assignment patterns of good quality and has a linear running time in the size of the set of flights, so its performance remains very effective for these large-sized instances. It can potentially improve the performances and serve as a benchmark for the block decomposition approximation. Note that whenever an algorithm is needed to evaluate the optimal assignment during the rolling horizon process or the block decomposition approximation, the direct implementation of the dynamic programming algorithm (Algorithm \ref{DPpricing}) is used. Again, for these large-sized instances, we use the same parameters across different instances and there can be potential improvements if parameters are tuned for individual cases. Nonetheless, we present results with a fixed set of parameters which still confirm the strong performances of our proposed approaches.

The results of the computations are shown in Table \ref{largeInstance}. After extensive experiments, we observe that the adjacency parameters, $\overline{\sigma}$, can be very large at beginning and, consequently, the options of RHD and BD+RHD that involve decompositions based on these parameters become very ineffective and unable to obtain a good solution within the allocated time limit. Therefore, we do not report the results of these two options in Table \ref{largeInstance}. From the results in the table, we see the rolling horizon method with a fixed horizon size performs much better than the same method with a horizon size of $\overline{\sigma}$ which struggles to solve these large-sized instances. If we further keep the horizon size at most $20$ in the RHM option, we obtain comparable if not slightly better performances in all instances compared to the rolling horizon method with a fixed horizon size.  However, submodular maximization algorithm efficiently provides assignment patterns of good quality when $\overline{\sigma}$ is large and improves the performances compared to both RHF and RHM options in all instances. The benefits of using submodular maximization algorithm is especially significant in the cases derived from ATL arrivals. The reductions in computation time can be as large as about $50\%$. For instances $33$-$37$, we show that our proposed approach can solve the flight-to-gate problem with arrivals in a single day for the busiest airport in the world within very reasonable amount of time.\\

\begin{longtable}{cccccccccc}
\caption{Pricing problem methods on large-sized instances.\label{largeInstance}}\\
\hline
instance & methods & ct(sec.) & rt(sec.) & $LB_{root}$ & $UB_{root}$ & $gap_{root}$($\%$) & final obj. & node(s)\\
\hline
\hline
\multirow{3}{6em}{\centering{17\\(f800g200s1)}} & RHF & 1160.33 & 1160.33 & 0.0 & 0.0 & 0.00(a) & 0.0 & 1\\
& RHM & 1147.62 & 1147.62 & 0.0 & 0.0 & 0.00(a) & 0.0 & 1\\
& SM+RHM & 1067.63 & 1067.63 & 0.0 & 0.0 & 0.00(a) & 0.0 & 1\\\hline
\multirow{3}{6em}{\centering{18\\(f800g200s2)}} & RHF & 1166.7 & 1166.7 & 0.0 & 0.0 & 0.00(a) & 0.0 & 1\\
& RHM & 1076.56 & 1076.56 & 0.0 & 0.0 & 0.00(a) & 0.0 & 1\\
& SM+RHM & 1010.11 & 1010.11 & 0.0 & 0.0 & 0.00(a) & 0.0 & 1\\\hline
\multirow{3}{6em}{\centering{19\\(f800g200s3)}} & RHF & 1015.80 & 1015.80 & 0.0 & 0.0 & 0.00(a) & 0.0 & 1\\
& RHM & 899.28 & 899.28 & 0.0 & 0.0 & 0.00(a) & 0.0 & 1\\
& SM+RHM & 975.88 & 975.88 & 0.0 & 0.0 & 0.00(a) & 0.0 & 1\\\hline
\multirow{3}{6em}{\centering{20\\(f800g200a1)}} & RHF & 2313.15 & 2313.15 & 0.0 & 0.0 & 0.00(a) & 0.0 & 1\\
& RHM & 2381.8 & 2381.8 & 0.0 & 0.0 & 0.00(a) & 0.0 & 1\\
& SM+RHM & 1570.65 & 1570.65 & 0.0 & 0.0 & 0.00(a) & 0.0 & 1\\\hline
\multirow{3}{6em}{\centering{21\\(f800g200a2)}} & RHF & 2033.91 & 2033.91 & 0.0 & 0.0 & 0.00(a) & 0.0 & 1\\
& RHM & 2138.42 & 2138.42 & 0.0 & 0.0 & 0.00(a) & 0.0 & 1\\
& SM+RHM & 1708.62 & 1708.62 & 0.0 & 0.0 & 0.00(a) & 0.0 & 1\\\hline
\multirow{3}{6em}{\centering{22\\(f800g200a3)}} & RHF & 2658.21 & 2658.21 & 0.0 & 0.0 & 0.00(a) & 0.0 & 1\\
& RHM & 2673.93 & 2673.93 & 0.0 & 0.0 & 0.00(a) & 0.0 & 1\\
& SM+RHM & 1714.31 & 1714.31 & 0.0 & 0.0 & 0.00(a) & 0.0 & 1\\\hline
\multirow{3}{6em}{\centering{23\\(f800g200a4)}} & RHF & 2457.84 & 2457.84 & 0.0 & 0.0 & 0.00(a) & 0.0 & 1\\
& RHM & 2444.43 & 2444.43 & 0.0 & 0.0 & 0.00(a) & 0.0 & 1\\
& SM+RHM & 1769.04 & 1769.04 & 0.0 & 0.0 & 0.00(a) & 0.0 & 1\\\hline
\multirow{3}{6em}{\centering{24\\(f800g200a5)}} & RHF & 30932.55 & 3093.55 & 0.0 & 0.0 & 0.00(a) & 0.0 & 1\\
& RHM & 2989.79 & 2989.79 & 0.0 & 0.0 & 0.00(a) & 0.0 & 1\\
& SM+RHM & 1818.73 & 1818.73 & 0.0 & 0.25 & 0.25(a) & 0.25 & 1\\\hline
\multirow{3}{6em}{\centering{25\\(f1000g200s1)}} & RHF & 1675.46 & 1675.46 & 0.0 & 0.0 & 0.00(a) & 0.0 & 1\\
& RHM & 1540.65 & 1540.65 & 0.0 & 0.0 & 0.00(a) & 0.0 & 1\\
& SM+RHM & 1440.66 & 1440.66 & 0.0 & 0.0 & 0.00(a) & 0.0 & 1\\\hline
\multirow{3}{6em}{\centering{26\\(f1000g200s2)}} & RHF & 2286.41 & 2286.41 & 0.0 & 0.0 & 0.00(a) & 0.0 & 1\\
& RHM & 1959.05 & 1959.05 & 0.0 & 0.0 & 0.00(a) & 0.0 & 1\\
& SM+RHM & 1843.98 & 1843.98 & 0.0 & 0.0 & 0.00(a) & 0.0 & 1\\\hline
\multirow{3}{6em}{\centering{27\\(f1000g200s3)}} & RHF & 1720.46 & 1720.46 & 0.0 & 0.0 & 0.00(a) & 0.0 & 1\\
& RHM & 1585.0 & 1585.0 & 0.0 & 0.0 & 0.00(a) & 0.0 & 1 \\
& SM+RHM & 1580.16 & 1580.16 & 0.0 & 0.0 & 0.00(a) & 0.0 & 1\\\hline
\multirow{3}{6em}{\centering{28\\(f1000g200a1)}} & RHF & 5858.02 & 5858.02 & 0.0 & 0.0 & 0.00(a) & 0.0 & 1\\
& RHM & 6354.51 & 6354.51 & 0.0 & 0.0 & 0.00(a) & 0.0 & 1\\
& SM+RHM & 3407.24 & 3407.24 & 0.0 & 0.0 & 0.00(a) & 0.0 & 1\\\hline
\multirow{3}{6em}{\centering{29\\(f1000g200a2)}} & RHF & 7270.22 & 7270.22 & 0.0 & 0.0 & 0.00(a) & 0.0 & 1\\
& RHM & 7342.51 & 7342.51 & 0.0 & 0.0 & 0.00(a) & 0.0 & 1\\
& SM+RHM & 3206.97 & 3206.97 & 0.0 & 0.0 & 0.00(a) & 0.0 & 1\\\hline
\multirow{3}{6em}{\centering{30\\(f1000g200a3)}} & RHF & 4922.02 & 4922.02 & 0.0 & 0.0 & 0.00(a) & 0.0 & 1\\
& RHM & 4844.84 & 4844.84 & 0.0 & 0.0 & 0.00(a) & 0.0 & 1\\
& SM+RHM & 3701.92 & 3701.92 & 0.0 & 0.0 & 0.00(a) & 0.0 & 1\\\hline
\multirow{3}{6em}{\centering{31\\(f1000g200a4)}} & RHF & 5897.09 & 5897.09 & 0.0 & 0.0 & 0.00(a) & 0.0 & 1\\
& RHM & 5715.54 & 5715.54 & 0.0 & 0.0 & 0.00(a) & 0.0 & 1\\
& SM+RHM & 3093.73 & 3093.73 & 0.0 & 0.0 & 0.00(a) & 0.0 & 1\\\hline
\multirow{3}{6em}{\centering{32\\(f1000g200a5)}} & RHF & 4913.89 & 4913.89 & 0.0 & 0.0 & 0.00(a) & 0.0 & 1\\
& RHM & 4654.29 & 4654.29 & 0.0 & 0.0 & 0.00(a) & 0.0 & 1\\
& SM+RHM & 2966.66 & 2966.66 & 0.0 & 0.0 & 0.00(a) & 0.0 & 1\\\hline
\multirow{3}{6em}{\centering{33\\(f1113g192a19)}} & RHF & 6305.36 & 6305.36 & 0.0 & 0.0 & 0.00(a) & 0.0 & 1 \\
& RHM & 6177.28 & 6177.28 & 0.0 & 0.0 & 0.00(a) & 0.0 & 1\\
& SM+RHM & 3865.39 & 3865.39 & 0.0 & 0.0 & 0.00(a) & 0.0 & 1\\\hline
\multirow{3}{6em}{\centering{34\\(f1095g192a20)}} & RHF & 5763.05 & 5763.05 & 0.0 & 0.0 & 0.00(a) & 0.0 & 1\\
& RHM & 6808.54 & 6808.54 & 0.0 & 0.0 & 0.00(a) & 0.0 & 1\\
& SM+RHM & 3420.51 & 3420.51 & 0.0 & 0.0 & 0.00(a) & 0.0 & 1\\\hline
\multirow{3}{6em}{\centering{35\\(f1092g192a21)}} & RHF & 7156.45 & 7156.45 & 0.0 & 0.0 & 0.00(a) & 0.0 & 1\\
& RHM & 8568.96 & 8568.96 & 0.0 & 0.0 & 0.00(a) & 0.0 & 1\\
& SM+RHM & 3603.5 & 3603.5 & 0.0 & 0.0 & 0.00(a) & 0.0 & 1\\\hline
\multirow{3}{6em}{\centering{36\\(f1125g192a22)}} & RHF & 8384.49 & 8384.49 & 0.0 & 0.0 & 0.00(a) & 0.0 & 1\\
& RHM & 9347.75 & 9347.75 & 0.0 & 0.0 & 0.00(a) & 0.0 & 1\\
& SM+RHM & 4279.68 & 4279.68 & 0.0 & 0.0 & 0.00(a) & 0.0 & 1\\\hline
\multirow{3}{6em}{\centering{37\\(f1125g192a23)}} & RHF & 7784.41 & 7784.41 & 0.0 & 0.0 & 0.00(a) & 0.0 & 1\\
& RHM & 7727.33 & 7727.33 & 0.0 & 0.0 & 0.00(a) & 0.0 & 1\\
& SM+RHM & 4325.52 & 4325.52 & 0.0 & 0.0 & 0.00(a) & 0.0 & 1\\\hline
\end{longtable}

\subsection{Summary of the computational results}
In summary, it seems that the dynamic programming algorithm together with the submodular maximization offer the best performance for the small-sized and moderate-sized problems. For the large-sized instances, it seems that running the submodular maximization algorithm when $\overline{\sigma}$ is large and using the rolling horizon method while keeping the horizon sizes at most $20$ otherwise lead to the best performances. Moreover, the binary program produces feasible solutions of good quality for almost all instances as the gaps are usually very small. The amount of time taken to obtain those feasible solutions is also reasonable in practice. 

Finally note that all instance data can be accessed online at the authors' websites.   

\section{Concluding remarks} \label{conclusions}
We use a column generation approach to solve the flight-to-gate assignment problem aimed at minimizing the total arrival delays. Instead of using the integer program solver for the pricing problem, more efficient approximation algorithms and dynamic programming algorithms are proposed. Among the proposed algorithms, submodular maximization algorithm shows very strong performances on the small and medium-sized instances and together with the rolling horizon framework, it allows us to obtain solutions of very good quality very efficiently for the large-sized instances.

There are few possible extensions to consider. One of them is to consider the interferences between different gates. With different airport layouts, this can be realized by linking constraints in the master problem or a decomposition of the pricing problem by gate groups instead of by each individual gate. Another extension is to take the uncertainties in the arrival times and turn times into consideration. A stochastic programming approach may be suitable to tackle this version of the problem.

\appendix
\section{Appendix A.}
This proof is given in \cite{subMax} and we modified it to match our assumption.\\
Let $OPT$ denote an optimal solution and $OPT_i := (OPT \cup X_i) \cap Y_i$, then we have that $OPT_0 = OPT$ and $OPT_n = X_n = Y_n$. Here are two useful lemmas to be used later in the proof of the theorem. 
\begin{lemma}\label{submodularApproxLemma1}
For every $1 \le i \le n$, $a_i + b_i \ge 0$.
\end{lemma}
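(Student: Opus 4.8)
The plan is to exploit the set-inclusion invariant maintained by Algorithm \ref{SubmodularApproximation} together with the submodularity of $f$ proved in Lemma \ref{submodFunc}. First I would establish, by induction on the iteration counter, that $X_{i-1} \subseteq Y_{i-1}$ holds at the start of every iteration $i$. Initially $X_0 = \emptyset \subseteq \mathcal{F}' = Y_0$. In each iteration the algorithm either replaces $X_{i-1}$ by $X_{i-1} \cup \{i\}$ while leaving $Y$ unchanged, or replaces $Y_{i-1}$ by $Y_{i-1} \setminus \{i\}$ while leaving $X$ unchanged; in the former case $i$ already belonged to $Y_{i-1}$, and in the latter case $i$ did not belong to $X_{i-1}$, so in both cases the inclusion is preserved. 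This also records the two facts I will need: at the start of iteration $i$ we have $i \notin X_{i-1}$ and $i \in Y_{i-1}$.

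Next I would rewrite $a_i$ and $b_i$ so that submodularity applies to one common element added to two nested sets. Set $S := X_{i-1}$ and $T := Y_{i-1} \setminus \{i\}$. Since $i \notin X_{i-1}$, the invariant $X_{i-1} \subseteq Y_{i-1}$ gives the nesting $S \subseteq T$, and $i \notin T$. Observing that $T \cup \{i\} = Y_{i-1}$, I can write
\[
b_i = f(Y_{i-1} \setminus \{i\}) - f(Y_{i-1}) = f(T) - f(T \cup \{i\}) = -\bigl(f(T \cup \{i\}) - f(T)\bigr),
\]
while $a_i = f(S \cup \{i\}) - f(S)$.

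Then the diminishing-returns inequality from Lemma \ref{submodFunc}, applied to the nested sets $S \subseteq T$ and the element $i \notin T$, yields
\[
f(S \cup \{i\}) - f(S) \ge f(T \cup \{i\}) - f(T),
\]
which is exactly $a_i \ge -b_i$, i.e. $a_i + b_i \ge 0$, as claimed. The argument is short, and the only real subtlety is the bookkeeping in the first step: one must be careful that by iteration $i$ the element $i$ has neither been inserted into $X$ nor deleted from $Y$, so that $a_i$ and $b_i$ are genuinely the marginal gain and the marginal loss of the \emph{same} element taken on two comparable sets. Once $S$ and $T$ are correctly identified, the conclusion is an immediate instance of the definition of submodularity, so I do not anticipate any further obstacle.
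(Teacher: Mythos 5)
Your proof is correct and takes essentially the same approach as the paper: the paper applies the lattice form of submodularity, $f(A)+f(B) \ge f(A\cup B)+f(A\cap B)$, to $A = X_{i-1}\cup\{i\}$ and $B = Y_{i-1}\setminus\{i\}$, which is precisely your diminishing-returns inequality on the nested sets $X_{i-1} \subseteq Y_{i-1}\setminus\{i\}$ with the common element $i$ rewritten. Your explicit induction establishing the invariant $X_{i-1}\subseteq Y_{i-1}$ with $i \in Y_{i-1}$ and $i \notin X_{i-1}$ is sound bookkeeping that the paper leaves implicit, and your marginal-gains formulation matches the form of submodularity actually proved in Lemma \ref{submodFunc}.
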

\proof{Proof of Lemma \ref{submodularApproxLemma1}.}
Note that if $f$ is a submodular function with a ground set $\mathcal{F}'$, we have that if $A,B \subset \mathcal{F}'$, $f(A) + f(B) \ge f(A \cup B) + f(A \cap B)$.\\
Now, we see that $(X_{i-1} \cup \{u_i\}) \cup (Y_{i-1} \backslash \{u_i\}) = Y_{i-1}$ and $(X_{i-1} \cup \{u_i\}) \cap (Y_{i-1} \backslash \{u_i\}) = X_{i-1}$. Then by the above defintion, we have that 
\begin{align}
a_i + b_i & = [f(X_{i-1} \cup \{u_i\}) - f(X_{i-1})] + [f(Y_{i-1} \cup \{u_i\}) - f(Y_{i-1})]\\
& = [f(X_{i-1} \cup \{u_i\}) + f(Y_{i-1} \backslash \{u_i\})] - [f(X_{i-1}) + f(Y_{i-1})] \ge 0.
\end{align} 
\endproof

\begin{lemma}\label{submodularApproxLemma2}
For every $1 \le i \le n$, 
\begin{align}
\mathbb{E} [f(OPT_{i-1}) - f(OPT_i)] \le \frac{1}{2} \mathbb{E} [f(X_i) - f(X_{i-1}) + f(Y_i) - f(Y_{i-1})] \label{submodularMainExp}
\end{align}
\end{lemma}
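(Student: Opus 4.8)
The plan is to prove the inequality pointwise after conditioning on the outcome of the first $i-1$ iterations, so that $X_{i-1}$ and $Y_{i-1}$ are fixed and the only randomness is the coin flipped at iteration $i$; taking expectations over the earlier randomness then yields the stated bound by the tower property. Throughout I write $u_i$ for the element examined at iteration $i$, consistent with Lemma~\ref{submodularApproxLemma1}. I first record the structural facts I need: since elements are added to $X$ or deleted from $Y$ one at a time, one has $X_{i-1} \subseteq Y_{i-1}$ with $Y_{i-1} \setminus X_{i-1} = \{u_i, \ldots, u_n\}$, and consequently $X_{i-1} \subseteq OPT_{i-1} \subseteq Y_{i-1}$ with $u_i \in Y_{i-1} \setminus X_{i-1}$. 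From the definition $OPT_i = (OPT \cup X_i) \cap Y_i$ I would then track how $OPT_i$ differs from $OPT_{i-1}$: it changes only in the status of $u_i$, and only in the two relevant cases -- when the algorithm adds $u_i$ to $X$ while $u_i \notin OPT$ (then $OPT_i = OPT_{i-1} \cup \{u_i\}$), and when it deletes $u_i$ from $Y$ while $u_i \in OPT$ (then $OPT_i = OPT_{i-1} \setminus \{u_i\}$); in the two complementary cases $OPT_i = OPT_{i-1}$.

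The two quantitative inputs are submodularity bounds on these marginals. Using $OPT_{i-1} \subseteq Y_{i-1} \setminus \{u_i\}$ and the diminishing-returns form of submodularity, I obtain $f(OPT_{i-1} \cup \{u_i\}) - f(OPT_{i-1}) \ge f(Y_{i-1}) - f(Y_{i-1}\setminus\{u_i\}) = -b_i$, i.e.\ $f(OPT_{i-1}) - f(OPT_{i-1}\cup\{u_i\}) \le b_i$. Symmetrically, using $X_{i-1} \subseteq OPT_{i-1}\setminus\{u_i\}$, I get $f(OPT_{i-1}) - f(OPT_{i-1}\setminus\{u_i\}) \le a_i$. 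The crucial point is the ``crossing'': the loss incurred when the algorithm \emph{adds} $u_i$ is controlled by $b_i$, while the loss when it \emph{deletes} $u_i$ is controlled by $a_i$.

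With these in hand I split into cases according to the signs of $a_i$ and $b_i$. When exactly one of them is positive, say $a_i > 0 \ge b_i$, the algorithm acts deterministically, the right-hand side equals $\tfrac12 a_i$, and the left-hand side is either $0$ (when $u_i \in OPT$) or at most $b_i \le 0$ (when $u_i \notin OPT$), so the bound is immediate; the degenerate case $a_i = b_i = 0$ is handled identically, and Lemma~\ref{submodularApproxLemma1} guarantees that these exhaust the non-randomized possibilities. When $a_i, b_i > 0$ the coin is genuinely random with $p := a_i/(a_i+b_i)$; computing both conditional expectations gives right-hand side $\tfrac{a_i^2+b_i^2}{2(a_i+b_i)}$ and left-hand side at most $\tfrac{a_i b_i}{a_i+b_i}$, where the factor $b_i$ from an addition and $a_i$ from a deletion recombine into $a_i b_i$ regardless of whether $u_i \in OPT$.

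The main obstacle, and the step I would be most careful about, is this last reduction: the whole inequality collapses to $2 a_i b_i \le a_i^2 + b_i^2$, that is $(a_i - b_i)^2 \ge 0$, but this works only because the submodularity bounds were matched to the opposite coefficient ($b_i$ for an addition, $a_i$ for a deletion). Verifying the bookkeeping of $OPT_i$ versus $OPT_{i-1}$ in each of the four sub-cases, and confirming that the irrelevant cases contribute exactly zero to the left-hand side, is where the argument is most error-prone, so I would write those transitions out explicitly before invoking the algebraic identity.
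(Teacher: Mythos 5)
Your proposal is correct and takes essentially the same route as the paper's Appendix~A proof: you condition on the outcome of the first $i-1$ iterations, track the transition from $OPT_{i-1}$ to $OPT_i$ via the status of $u_i$, apply the same two crossed submodularity bounds (loss at most $b_i$ when $u_i$ is added with $u_i \notin OPT$, via $OPT_{i-1} \subseteq Y_{i-1}\setminus\{u_i\}$; at most $a_i$ when $u_i$ is deleted with $u_i \in OPT$, via $X_{i-1} \subseteq OPT_{i-1}\setminus\{u_i\}$), and reduce the randomized case to $2a_ib_i \le a_i^2 + b_i^2$. Your case split by the signs of $a_i, b_i$ (using Lemma~\ref{submodularApproxLemma1} for exhaustiveness, including the degenerate $a_i = b_i = 0$ convention) matches the paper's three cases up to relabeling, so there is nothing to correct.
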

\proof{Proof of Lemma \ref{submodularApproxLemma2}.}
It is sufficient to prove (\ref{submodularMainExp}) conditioned on any event of the form $X_{i-1} = S_{i-1}$, when $S_{i-1} = \{u_1, \cdots, u_{i-1}\}$ and the probability $X_{i-1} = S_{i-1}$ is non-zero. Hence fix such an event for a given $S_{i-1}$. The rest of the proof implicitly assumes everything is conditioned on this event. Note due to conditioning, the following quantities become constants,
\begin{itemize}
	\item $Y_{i-1} = S_{i-1} \cup \{u_i, \cdots, u_n\}$
	\item $OPT_{i-1} := (OPT \cup X_{i-1}) \cap Y_{i-1} = S_{i-1} \cup (OPT \cap \{u_i, \cdots, u_n\})$
	\item $a_i$ and $b_i$.
\end{itemize}
From Lemma \ref{submodularApproxLemma1}, we have $a_i + b_i \ge 0$, so we only need to consider three cases. 
\begin{enumerate}
	\item $a_i \ge 0$ and $b_i < 0$. In this case, $a_i^{\prime} / (a_i^{\prime} + b_i^{\prime}) = 1$. Then we have $Y_i = Y_{i-1}$ and $X_i = S_{i-1} \cup \{u_i\}$. Hence $f(Y_{i-1}) - f(Y_i) = 0$. Also, $OPT_i = (OPT \cup X_i) \cap Y_i = (OPT \cup X_{i-1} \cup \{u_i\}) \cap Y_{i-1} = OPT_{i-1} \cup \{u_i\}$. Then we are left to prove that  
	\begin{equation}
	f(OPT_{i-1}) - f(OPT_{i-1} \cup \{u_i\}) \le \frac{1}{2}[f(X_i) - f(X_{i-1})] = \frac{a_i}{2}.
	\end{equation}
	If $u_i \in OPT_{i-1}$, then the left hand side is zero and this inequality is satisfied. If $u_i \notin OPT_{i-1}$,  then we note that 
	\begin{equation}
	OPT_{i-1} = (OPT \cup X_{i-1}) \cap Y_{i-1} \subseteq Y_{i-1} \backslash \{u_i\}.
	\end{equation}
	Next, by the definition of submodularity of $f$, we have now
	\begin{equation}
	f(OPT_{i-1}) - f(OPT_{i-1}  \cup \{u_i\}) \le f(Y_{i-1} \backslash \{u_i\}) - f(Y_{i-1}) = b_i \le 0 \le \frac{a_i}{2}.	
	\end{equation}
	\item $a_i < 0$ and $b_i \ge 0$. This case is analogous to the previous case.
	\item $a_i \ge 0$ and $b_i > 0$. In this case, we have $a_i^{\prime} = a_i$ and $b_i^{\prime} = b_i$. Then we can compute the left hand side of (\ref{submodularMainExp}) by 
	\begin{align}
	\mathbb{E} [f(X_i) - f(X_{i-1}) + f(Y_i) - f(Y_{i-1})] & = \frac{a_i}{a_i + b_i} [f(X_{i-1} \cup \{u_i\}) - f(X_{i-1})] \notag \\
	& + \frac{b_i}{a_i + b_i} [f(Y_{i-1} \cap \{u_i\}) - f(Y_{i-1})]\\
	& = \frac{a_i^2 + b_i^2}{a_i + b_i} \label{a_i>0b_i>0_1}.
	\end{align}
	Next we upper bound $\mathbb{E} [f(OPT_{i-1}) - f(OPT_i)]$,
	\begin{align}
	\mathbb{E} [f(OPT_{i-1}) - f(OPT_i)] & = \frac{a_i}{a_i + b_i} [f(OPT_{i-1}) - f(OPT_{i-1}) \cup \{u_i\}] \notag \\
	& + \frac{b_i}{a_i + b_i} [f(OPT_{i-1}) - f(OPT_{i-1} \backslash \{u_i\})] \\
	& \le \frac{a_ib_i}{a_i + b_i} \label{a_i>0b_i>0_2}.
	\end{align}
	Last inequality follows from the following cases. Note that $u_i \in Y_{i-1}$ and $u_i \notin X_{i-1}$,
	\begin{enumerate}
	\item $u_i \notin OPT_{i-1}$, then we note that the second term of the left hand side the last inequality is zero and 
	\begin{equation}
	OPT_{i-1} = (OPT \cup X_{i-1}) \cap Y_{i-1} \subseteq Y_{i-1} \backslash \{u_i\}.
	\end{equation}
	Next, by the definition of submodularity of $f$, we have now
	\begin{equation}
	f(OPT_{i-1}) - f(OPT_{i-1}  \cup \{u_i\}) \le f(Y_{i-1} \backslash \{u_i\}) - f(Y_{i-1}) = b_i.
	\end{equation}
	\item $u_i \in OPT_{i-1}$, then we note that the first term of the left hand side the last inequality is zero and 
	\begin{equation}
	X_{i-1} \subseteq (OPT \cup X_{i-1}) \cap Y_{i-1} \backslash \{u_i\} = OPT_{i-1} \backslash \{u_i\}.
	\end{equation}
	Next, by the definition of submodularity of $f$, we have now
	\begin{equation}
	f(OPT_{i-1}) - f(OPT_{i-1} \backslash \{u_i\}) \le f(X_{i-1} \cup \{u_i\}) - f(X_{i-1}) = a_i.	
	\end{equation}
	\end{enumerate}
	Then with (\ref{a_i>0b_i>0_1}) and (\ref{a_i>0b_i>0_2}), we are left to verify
	\begin{equation}
	\frac{a_ib_i}{a_i + b_i} \le \frac{1}{2} \left(\frac{a_i^2 + b_i^2}{a_i + b_i} \right),	
	\end{equation}
	which can be easily verified. 
\end{enumerate}
\endproof

With the two Lemmas, we are ready to show the approximation guarantee of the algorithm. 
\proof{Proof of Theorem \ref{subguarantee}.}
If we sum all inequalities from lemma \ref{submodularApproxLemma2} over $1 \le i \le n$, we have that 
\begin{align}
& \sum_{i=1}^n \mathbb{E} [f(OPT_{i-1}) - f(OPT_i)] \le \frac{1}{2} \sum_{i=1}^n \mathbb{E} [f(X_i) - f(X_{i-1}) + f(Y_i) - f(Y_{i-1})]\\
\implies & \mathbb{E} [f(OPT_0) - f(OPT_n)] \le \frac{1}{2} \mathbb{E} [f(X_n) - f(X_0) + f(Y_n) - f(Y_0)] .\label{constantApproxExpr}
\end{align}
Note that $X_0 = \emptyset$, so we have $f(X_0) = 0$ and by our assumption we have $f(Y_0) = f(\mathcal{F}') \ge 0$. In addition, we have $OPT_0 = OPT$ and $OPT_n = X_n = Y_n$, so (\ref{constantApproxExpr}) gives that 
\begin{align*}
f(OPT) \le \frac{1}{2} \mathbb{E} [2f(X_n) - f(Y_0)] + \mathbb{E} f(X_n) \le \frac{1}{2} \mathbb{E} [2f(X_n)] + \mathbb{E} f(X_n) \implies \frac{1}{2} f(OPT) \le \mathbb{E} f(X_n).
\end{align*}
\endproof

\section{Appendix B.}
\subsection*{Rational input data for dynamic programming algorithm.} \label{intDynamicAlgo}
Recall a direct implementation of the dynamic programming algorithm terminates with a worst case $2^n$ recursive calls as proven in Proposition \ref{finitealgorithm}. We can improve the complexity significantly if we further assume all input data are rationals and run a backward version of the dynamic programming algorithm. Input data includes all input parameters to the pricing problems, namely the arrival times, $a_i$, the dual information, $\pi_i$, and the processing time, $p_i$. Without loss of generality, we assume the input data are expressed as rationals with a common denominator. We introduce a new notion of last time to consider for the pricing problems, denoted by $c$. This is the time beyond which no further flights are accepted. It suffices to choose $c$ as $a_n + \max_{1 \le i \le n} \pi_i$. For any flight $j$, if we consider the function $g_{j}(t)$ with $t > c$, we get that $t > a_l + \pi_l$ for $j \le l \le n$. Consequently the function value $g_j(t) = g_l(t) = g_{n+1}(t) = 0$ and $x_{lk} = 0$ for $j \le l \le n$.

For every $i \in [n]$, we see that the function $g_i(t)$ is piecewise linear and we label the points where the slope of the function changes as the breakpoints of the function. When we have integral input data, each $g_i(t)$ does not have too many breakpoints. The following theorem gives a formal statement about this observation. 
\begin{theorem}\label{breakpoints}
Assume all input data are rational and c is last time to consider. Let the common denominator be $d$, then each function $g_i(t)$ for $i \in [n]$ has at most $O(n^2dc)$ breakpoints.  
\end{theorem}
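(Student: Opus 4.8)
The plan is to avoid iterating the recursion (\ref{recureq}) directly---which would let the denominators of successive breakpoints compound---and instead to realize $g_i$ as a pointwise maximum of simple, ``level-one'' pieces. For a fixed gate and a fixed subset $S \subseteq \{i, i+1, \ldots, n\}$ of accepted flights, let $V_S(t)$ denote the net benefit obtained by accepting exactly the flights in $S$ (in increasing index order) and parking each at the earliest feasible time subject to the first park time being at least $t$. By the greedy park-time rule described in Subsection \ref{approximation}, these earliest park times minimize the total delay, so $g_i(t) = \max_S V_S(t)$, the maximum ranging over all $S \subseteq \{i, \ldots, n\}$. First I would record that each $V_S$ is piecewise linear and nonincreasing, and that its slope on any linear piece equals the negative of the number of accepted flights whose park time is currently pinned to $t$ through the chain of processing times; hence every slope is an integer in $\{-(n-i+1), \ldots, 0\}$, so any two slopes differ by at most $n$.

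Next I would establish two arithmetic facts about a single $V_S$, both by the same greedy recursion. Writing the park time of the first accepted flight as $\max\{a_{j_1}, t\}$ and each subsequent one as $\max\{a_{j_\ell}, \text{park}_{j_{\ell-1}} + p_{j_{\ell-1}}\}$, one sees (i) that the breakpoints of $V_S$ occur exactly where one of these maxima switches its active argument, i.e.\ at points of the form $a_j - \sum_l p_l$, which are integer multiples of $1/d$; and (ii) that whenever $t$ is an integer multiple of $1/d$, every park time---being an iterated maximum of multiples of $1/d$---is again a multiple of $1/d$, so $V_S(t) = \sum_{j \in S}\pi_j - \sum_{j\in S}(\text{park}_j - a_j)$ is a multiple of $1/d$ as well. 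Because the endpoints of every maximal linear piece of $V_S$ are breakpoints (or the domain endpoints $0$ and $c$), all of them multiples of $1/d$, fact (ii) forces the intercept of each linear piece to be a multiple of $1/d$ too.

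With these two facts in hand, I would localize the breakpoints of the envelope $g_i = \max_S V_S$. Any breakpoint of $g_i$ is either a breakpoint of one of the individual $V_S$ that attains the maximum there---hence a multiple of $1/d$ by (i)---or a point where two linear pieces of distinct $V_S, V_{S'}$ cross. On a common linear interval write these pieces as $mt + \beta$ and $m't + \beta'$; by the slope bound, $m, m'$ are integers with $m \neq m'$ and $|m - m'| \le n$, and by fact (ii) the intercepts $\beta, \beta'$ are multiples of $1/d$. Solving $mt + \beta = m't + \beta'$ gives $t = (\beta' - \beta)/(m-m')$, an integer multiple of $1/(d\,|m-m'|)$. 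Consequently every breakpoint of $g_i$ lies in the set
\begin{equation*}
\Gamma := \bigcup_{k=1}^{n} \left\{ \frac{j}{dk} : j \in \mathbb{Z},\ 0 \le \frac{j}{dk} \le c \right\}.
\end{equation*}

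Finally I would count. For each $k$ the inner set has at most $dkc + 1$ elements, so $|\Gamma| \le \sum_{k=1}^{n}(dkc + 1) = dc\,\frac{n(n+1)}{2} + n = O(n^2 dc)$; since the breakpoints of $g_i$ are distinct points of $\Gamma$, there are at most $O(n^2 dc)$ of them, as claimed. The one delicate point---and the reason for the envelope reformulation---is the localization step: had I instead pushed breakpoints through the recursion level by level, the crossing of two functions whose breakpoints already sit on a $1/(dk)$-grid could land on an even finer grid, and the denominators would grow uncontrollably with $i$. Pinning every $V_S$ to the coarse $1/d$-grid from the outset is exactly what keeps each crossing on a $1/(dk)$-grid with $k \le n$ and yields the clean $O(n^2 dc)$ bound.
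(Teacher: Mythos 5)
Your proof is correct, but it takes a genuinely different route from the paper's. The paper works directly through the recursion (\ref{recureq}): it first proves a slope lemma (slopes of $g_{n-i}$ lie in $\{0,-1,\ldots,-i-1\}$, the analogue of your integer-slope bound), then locates breakpoints by equating the two branches of the max, solving $(a_{n-i}+\pi_{n-i}-t)+g_{n-i+1}(t+p_i)=g_{n-i+1}(t)$ with pieces $c_1-k_1t$ and $c_2-k_2t$ to get $t=|a_{n-i}+\pi_{n-i}+c_1-c_2|/|k_1+1-k_2|$, and asserts the numerator is $C/d$ for an integer $C$, which places all breakpoints on grids $1/(kd)$ with $k\le i+1$ and yields the same count $dc+2dc+\cdots+(i+1)dc=O(n^2dc)$. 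You instead realize $g_i$ as the upper envelope $\max_S V_S$ over subsets of accepted flights, pin each $V_S$ to the $1/d$-grid (breakpoints at $a_j-\sum_l p_l$ and values, hence intercepts, in $(1/d)\mathbb{Z}$), and localize envelope breakpoints as either $V_S$-breakpoints or crossings of integer-slope pieces. The notable difference is precisely the ``delicate point'' you flag: the paper's assertion that $a_{n-i}+\pi_{n-i}+c_1-c_2$ has denominator $d$ requires knowing the intercepts $c_1,c_2$ of pieces of $g_{n-i+1}$ lie on the $1/d$-grid even though its breakpoints sit on finer $1/(kd)$-grids, and the paper does not argue this, leaving open (on its face) the denominator-compounding you describe; your fact (ii), via the envelope decomposition, supplies exactly the missing invariant, since every piece of $g_{n-i+1}$ is a piece of some $V_S$. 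What each approach buys: the paper's induction is shorter and gives the slightly sharper per-index slope set (hence grids $1/(kd)$ with $k\le i+1$ rather than your uniform $k\le n$, immaterial for the $O(n^2dc)$ bound), while your argument is self-contained and rigorous on the arithmetic point, at the harmless cost of introducing exponentially many $V_S$, which are used only to locate breakpoints on a grid, not algorithmically. Two trivia: your ``exactly'' in fact (i) should be ``only'' (a max-switch need not produce a slope change), and you should include $V_\emptyset\equiv 0$ in the envelope; neither affects the count.
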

To prove Theorem \ref{breakpoints}, we first consider the possible slopes for a particular function $g_{n-i}(t)$.   
\begin{lemma} \label{fslope}
Given a function $g_{n-i}(t)$, its slope can only take values from the set $\{0,-1,\cdots, -i-1\}$
\end{lemma}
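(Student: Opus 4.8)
The plan is to prove the statement by backward induction on the flight index, which is equivalent to forward induction on $i$. I would restate the claim as: for every $i \ge 0$, the piecewise-linear function $g_{n-i}(t)$ has all of its slopes contained in $\{0,-1,\ldots,-i-1\}$. The whole argument is driven by the recursion (\ref{recureq}) combined with three elementary facts about piecewise-linear functions: a horizontal translation $t \mapsto t + p_j$ leaves the set of attained slopes unchanged; adding the affine term $a_j + \pi_j - t$, which has slope $-1$, shifts every slope down by exactly $1$; and the pointwise maximum of two piecewise-linear functions equals, on each of its linear pieces, one of the two functions, so its set of slopes is contained in the union of the two individual slope sets.

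For the base case $i = 0$ I would read $g_n(t)$ off directly from (\ref{recureq}). For $t \le a_n$ and for $t > a_n + \pi_n$ the function is constant, with slope $0$; on the window $a_n < t \le a_n + \pi_n$ the recursion gives $g_n(t) = \max\{a_n + \pi_n - t,\,0\} = a_n + \pi_n - t$, which has slope $-1$. Hence the slopes of $g_n$ lie in $\{0,-1\}$, matching the claimed set for $i=0$.

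For the inductive step I would assume $g_{n-i+1}(t)$ has slopes in $\{0,-1,\ldots,-i\}$ and set $j = n - i$, so that $g_{j+1} = g_{n-i+1}$ satisfies the hypothesis. Examining each branch of (\ref{recureq}): on $t \le a_j$ the function is constant (slope $0$); on $t > a_j + \pi_j$ it equals $g_{j+1}(t)$, with slopes in $\{0,\ldots,-i\}$; and on $a_j < t \le a_j + \pi_j$ it is the maximum of $g_{j+1}(t)$ and $(a_j + \pi_j - t) + g_{j+1}(t + p_j)$. By the three facts above, the second argument has slopes in $\{-1,-2,\ldots,-i-1\}$, so the maximum of the two arguments has slopes in $\{0,\ldots,-i\} \cup \{-1,\ldots,-i-1\} = \{0,-1,\ldots,-i-1\}$. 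Collecting the branches yields the claimed slope set for $g_{n-i}$.

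I expect the main obstacle to be bookkeeping rather than conceptual difficulty: making the ``maximum creates no new slopes'' statement precise across the overlapping breakpoints of the two piecewise-linear arguments, and verifying that the three branches glue into a single continuous piecewise-linear function so that the slope set is genuinely the union of the branch slope sets. The gluing at $t = a_j + \pi_j$ requires $g_{j+1}(a_j + \pi_j + p_j) \le g_{j+1}(a_j + \pi_j)$; this follows because the inductive hypothesis forces every slope of $g_{j+1}$ to be non-positive, hence $g_{j+1}$ is non-increasing, and $p_j \ge 0$. I would flag this monotonicity as the key auxiliary observation, and note that since $g_{j+1}(s) = 0$ for $s$ beyond the last time $c$, the shifted evaluation $g_{j+1}(t + p_j)$ is always well defined and never introduces a slope outside $\{0,\ldots,-i\}$.
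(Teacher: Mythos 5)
Your proposal is correct and takes essentially the same route as the paper: a backward induction driven by the recursion (\ref{recureq}), with the explicit base case $g_n$ having slopes $\{0,-1\}$ and the inductive step observing that the term $(a_j+\pi_j-t)+g_{j+1}(t+p_j)$ shifts slopes down by one while the pointwise maximum introduces no slopes outside the union of the two slope sets. Your added remarks on monotonicity of $g_{j+1}$ and the gluing at $t=a_j+\pi_j$ are sound extra rigor (the paper leaves them implicit) but do not constitute a different argument.
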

\begin{proof}[proof of Lemma \ref{fslope}.]
We prove this lemma by backward induction. We observe that the function $g_n(t)$ is given by
\begin{align}
g_n(t) = \begin{cases}\pi_n \;\; & \text{ if }t \le a_n\\
a_n + \pi_n - t\;\; & \text{ if }a_n < t \le a_n + \pi_n \\
0\;\; & \text{ if }t > a_n + \pi_n.
\end{cases}
\end{align}

The set of slopes of function $g_n(t)$ is $\{0,-1\}$. This is the base case of $n$. Suppose the statement is true for $n$, $n-1$, $\cdots, n-i$. Then the set of possible slopes of $g_{n-i}(t)$ is $\{0,-1,\ldots,-i-1\}$. Now consider the function $g_{n-i-1}(t)$, based on the recursive formula (\ref{recureq}), 
\begin{align}\label{fslopeeq}
g_{n-i-1}(t) = \begin{cases}
g_{n-i-1}(a_i) & \text{if } t \le a_{n-i-1}\\
g_{n-i}(t) & \text{if } t > a_{n-i-1} + \pi_{n-i-1}\\
\max\{a_{n-i-1} + \pi_{n-i-1} - t + g_{n-i}(t + p_{n-i-1}), g_{n-i}(t)\} & \text{if }a_{n-i-1} < t \le a_{n-i-1} + \pi_{n-i-1}.
\end{cases} 
\end{align}

In the second case, the set of possible slopes of $g_{n-i-1}(t)$ is the same as that of $g_{n-i}(t)$. In the third case, $g_{n-i-1}(t)$ can have an additional slope than $g_{n-i}$ does. If $g_{n-i}(t+p_{n-i-1})$ has a slope of $-i-1$, the slope of $g_{n-i-1}(t)$ can be $-i-2$. Therefore, the set of possible slopes of $g_{n-i-1}$ is $\{0,-1,\ldots, -i-2\}$. Thus the statement is also true for $n-i-1$, which completes the proof.
\end{proof}

Intuitively, the slope of the function corresponds to the rate of loss in the total net benefits if we delay the park times of flights. For the flight $n-i$, it can only potentially affect the park times of itself and flights after it, i.e. from the set $\{n-i, n-i+1, \ldots, n\}$ and the corresponding function $g_{n-i}(t)$ can have slopes from the set $\{0,-1,\ldots,-i-1\}$.

Now we are ready to give a proof of theorem \ref{breakpoints}. 

\begin{proof}[proof of Theorem \ref{breakpoints}.]
Consider a function $g_{n-i}(t)$, from the recursive formula (\ref{recureq}), we observe that all the potential breakpoints can be computed by equating the two components in the max expression. The solution of $t$ may constitute a breakpoint of $g_{n-i}$. As a result, we consider the following linear equation
\begin{equation} \label{breakpointseq}
(a_{n-i} + \pi_{n-i} - t) + g_{n-i+1}(t + p_i) = g_{n-i+1}(t).
\end{equation}

Functions $g_{n-i+1}(t + p_i)$ and $g_{n-i+1}(t)$ are piecewise linear in variable $t$. We can write 
\begin{equation}
g_{n-i+1}(t + p_i) = c_1 - k_1t \text{ and } g_{n-i+1}(t) = c_2 - k_2t,
\end{equation}
where $c_1$ and $c_2$ are rational constants and $k_1$ and $k_2$ are integer constants. We only consider cases where $k_1 + 1 \neq k_2$, otherwise the two functions on the left hand side and right hand side of (\ref{breakpointseq}) have the same slope and do not constitute a breakpoint. Now equation (\ref{breakpointseq}) can be rewritten as 
\begin{equation}
a_{n-i} + \pi_{n-i} - t + c_1 - k_1t = c_2 - k_2t \implies t = \frac{|a_{n-i} + \pi_{n-i} +c_1 - c_2|}{|k_1 + 1 - k_2|}.
\end{equation}

As shown in lemma \ref{fslope}, $k_1$ and $k_2$ only take values from the set $\{0,-1,\ldots,-i\}$ and thus $|k_1 + 1 - k_2|$ can only take values from the set $\{1,\ldots,i+1\}$. In addition, since the input data are rationals with a common denominator $d$, the numerator $a_{n-i} + \pi_{n-i} + c_1 - c_2$ can be expressed as $C/d$ where $C$ is an integer. Then all the breakpoints of function $g_{n-i}(t)$ are in the set
\begin{align} \label{bpform}
\{0,e,2e,3e,\ldots,c\} \text{ where }e \in \{1/d,1/2d,1/3d,\ldots,1/(i+1)d\}.
\end{align}

The total number of breakpoints is then at most $1dc + 2dc + 3dc + \cdots + (i+1)dc = O(n^2dc)$, which completes the proof. 
\end{proof}

Note that since each of the functions $g_i(t)$ has at most $O(n^2dc)$ breakpoints, we only need to evaluate the function at these potential breakpoints to construct the function and consequently we need to run in $O(n^3dc)$ to construct all functions $g_i(t)$ in the interval $[0,c]$. In particular, note that only the function values at the points in the list of the potential breakpoints are needed since the functions are all piece-wise linear. We can first construct the function $g_n(t)$ in the interval $[0,c]$ by evaluating the function at the points in the list of the potential breakpoints given in (\ref{bpform}) in reverse order and then we construct the function $g_{j}(t)$ for $1 \le j \le n-1$ using previously constructed functions $g_l(t)$ for $j+1 \le l \le n$ based on the recursive formula (\ref{recureq}).

Although the above analyses provide a simpler complexity in the case of rational input data, it is not very effective in practice as the common denominator $d$ can be large after scaling the input data.

\section{Appendix C.}
\subsection*{Rolling horizon method.} \label{rollinghorizon}
The rolling horizon framework can also be utilized to decompose the pricing problems to reduce their sizes. As with any implementation of the rolling horizon method, we must decide on two parameters. One is the horizon size and the other is the window size. The horizon size can be either fixed or dynamically determined for each gate and each iteration. The adjacency parameter given in Definition \ref{adjacencyparameter} is an example of possible dynamically determined horizon sizes. We give a brief implementation of the rolling horizon method in Algorithm \ref{rollinghorizonalgo} that we have implemented in our experiments.
\begin{algorithm}[H]
\caption{Rolling horizon} \label{rollinghorizonalgo}
\begin{algorithmic}[1]
\State \textbf{Input:} horizon size, $l$, and window size, $w$
\State Initilize $s = 0$, $t= 0$, and  $S = \emptyset$
\While{$s + l \le n$}
\State Solve for the optimal assignment for the flights $\{s+1,s+2,\ldots, s+l\}$ at time $t$
\State Fix the assignments of first $w$ flights and add them to $S$
\State $t \gets \text{time at which the gate becomes available under the set of assigned flights } S$
\State $s \gets s + w$
\EndWhile
\State Solve for the optimal assignment for the flights $\{s+1,s+2,\ldots, n\}$ at time $t$
\State Fix the assignments of the flights $\{s+1,s+2,\ldots, n\}$ and add them to $S$ 
\State Compute the total net benefits of the assignment $S$, $f(S)$
\State \Return{$S$, $f(S)$}
\end{algorithmic}
\end{algorithm}
Similar to the block decomposition approximation algorithm, we refer to the discussions in Subsection \ref{approximation} for the computation of the total net benefits $f(S)$ and the time at which the gate becomes available under a set of assigned flights. 

\newpage
\bibliographystyle{plain}
\bibliography{GA.bib}

\end{document}